\newtheorem{thm}{Theorem}[section]
\newtheorem{prop}[thm]{Proposition}
\newtheorem{lem}[thm]{Lemma}
\theoremstyle{definition}
\newtheorem{defi}[thm]{Definition}
\newtheorem{rem}[thm]{Remark}
\newtheorem{prob}[thm]{Problem}
\begin{document}

\author{Morimichi Kawasaki}
\address[Morimichi Kawasaki]{Center for Geometry and Physics, Institute for Basic Science (IBS), Pohang 37673, Republic of Korea}
\email{kawasaki@ibs.re.kr}
\title{Bavard's duality theorem on conjugation-invariant norms}
\maketitle
\begin{abstract}
Bavard proved a duality theorem between commutator length and quasimorphisms.
Burago, Ivanov and Polterovich  introduced the notion of a conjugation-invariant norm which is a generalization of commutator length.
Entov and Polterovich proved that Oh-Schwarz spectral invariants are subset-controlled quasimorphisms which are geralizations of quasimorphisms.
In the present paper, we prove a Bavard-type duality theorem between conjugation-invariant (pseudo-)norms and subset-controlled quasimorphisms on stable groups.
We also pose a generalization of our main theorem and prove that ``stably non-displaceable subsets of symplectic manifolds are heavy" in a rough sense if that generalization holds.
\end{abstract}


\section{Definitions and Results}
\subsection{Definition}
Burago, Ivanov and Polterovich defined the notion of conjugation-invariant (pseudo-) norms on groups in \cite{BIP} and they gave a number of its applications. 
\begin{defi}[{\cite{BIP}}]\label{cin}
Let $G$ be a group. A function $\nu\colon G\to \mathbb{R}_{\geq0}$ is \textit{ a conjugation-invariant norm} on $G$ if $\nu$ satisfies the following axioms:
\begin{itemize}
\item[(1)] $\nu(1)=0$;
\item[(2)] $\nu(f)=\nu(f^{-1})$ for every $f\in G$;
\item[(3)] $\nu(fg)\leq \nu(f)+\nu(g)$ for every $f,g\in G$;
\item[(4)] $\nu(f)=\nu(gfg^{-1})$ for every $f,g\in G$;
\item[(5)] $\nu(f)>0$ for every $f\neq 1\in G$.
\end{itemize}
A function $\nu\colon G\to \mathbb{R}$ is \textit{a conjugation-invariant pseudo-norm} on $G$ if $\nu$ satisfies the above axioms (1),(2),(3) and (4).
\end{defi}
For a conjugation-invariant pseudo-norm $\nu$, let $s\nu$ denote the stabilization of $\nu$ \textit{i.e.} $s\nu(g)=\lim_{n\to \infty}\frac{\nu(g^n)}{n}$
(this limit exists by Fekete's Lemma).

For a perfect group $G$, the commutator length $cl$ on $G$ is a conjugation-invariant norm.
Bavard \cite{Bav} proved the following famous theorem (\cite{Bav}, see also \cite{C}).
\begin{thm}[Corollary of Bavard's duality theorem \cite{Bav}]
Let $g$ be an element of a perfect group $G$.
Then
$scl(g)>0$ if and only if there exists a homogeneous quasimorphism $\phi$ such that $\phi(g)>0$.
\end{thm}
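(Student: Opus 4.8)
The plan is to prove the two implications separately, and I expect the forward direction ($scl(g)>0 \Rightarrow$ existence of $\phi$) to be the genuinely hard one, while the reverse direction is a direct estimate.

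For the reverse direction, suppose $\phi$ is a homogeneous quasimorphism with defect $D(\phi)=\sup_{a,b}|\phi(ab)-\phi(a)-\phi(b)|$ and $\phi(g)>0$. First I would record two standard facts about homogeneous quasimorphisms: they are invariant under conjugation, and they satisfy the single-commutator bound $|\phi([a,b])|\le D(\phi)$. Applying subadditivity of the defect to a product of $k$ commutators then yields $|\phi(h)|\le(2k-1)D(\phi)$ whenever $h$ is a product of $k$ commutators, hence $|\phi(h)|\le(2\,cl(h)-1)D(\phi)$ for every $h\in G$. Substituting $h=g^n$, using homogeneity $\phi(g^n)=n\phi(g)$ and dividing by $n$, I would let $n\to\infty$ to obtain
\[
|\phi(g)|\ \le\ 2\,D(\phi)\cdot scl(g).
\]
Since $G$ is perfect, every homomorphism $G\to\mathbb{R}$ vanishes, so $\phi(g)>0$ forces $D(\phi)>0$; the displayed inequality then gives $scl(g)\ge\phi(g)/(2D(\phi))>0$.

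For the forward direction I would argue by (Hahn--Banach) duality, which is the heart of Bavard's theorem. The idea is to regard $scl$ not as a function on $G$ but as a genuine seminorm on a real vector space built from $G$: one takes the space of finite real $1$-chains $\mathbb{R}[G]$, quotients by the subspace generated by the homogenization relations $g^n-n\cdot g$, the conjugation relations $hgh^{-1}-g$, and the boundaries, to obtain a space $B$ on which $scl$ descends to an honest seminorm with $scl(g)=\|[g]\|$ up to the usual normalization. Perfectness guarantees that $cl(g)<\infty$ for all $g$, so $scl$ is finite everywhere, and that $g$ determines a genuine class $[g]\in B$. If $scl(g)>0$ then $[g]$ is nonzero with positive norm, so by Hahn--Banach there is a linear functional $f$ on $B$ with $f([g])=\|[g]\|>0$ and operator norm $\le 1$. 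The final step is to recognize the pullback of $f$ to $G$ as a homogeneous quasimorphism $\phi$ whose defect is controlled by $\|f\|$ and with $\phi(g)>0$.

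The main obstacle is precisely this last identification: establishing the duality between the defect norm on the space of homogeneous quasimorphisms (modulo homomorphisms) and the $scl$ filling seminorm on $B$, that is, that bounded functionals vanishing on boundaries correspond exactly to homogeneous quasimorphisms with the defect as the dual norm. This is where the quantitative content of Bavard's theorem lives, and everything else is bookkeeping. Combining the two directions yields the stated equivalence; perfectness of $G$ is used both to guarantee that $scl$ is finite on all of $G$ and, as above, to rule out competing homomorphisms.
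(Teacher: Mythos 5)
First, note that the paper does not prove this theorem: it is quoted from Bavard's work as a known result, and the closest thing to a proof in the paper is the argument for Theorem \ref{main thm}, which generalizes it and follows exactly the template you describe (build a normed vector space from formal chains, apply Hahn--Banach as in Proposition \ref{HahnBanach}, and convert bounded functionals back into quasimorphisms as in Proposition \ref{hom to controlled qm}). Your reverse direction is complete and correct: the bound $|\phi(h)|\le(2k-1)D(\phi)$ for a product of $k$ commutators, the passage to $g^n$, and the use of perfectness to force $D(\phi)>0$ are all sound and give $scl(g)\ge\phi(g)/(2D(\phi))>0$.

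The forward direction, however, has two problems. The smaller one is the construction of $B$: if you quotient $\mathbb{R}[G]$ by the subspace spanned by the boundaries $a+b-ab$, you obtain $H_1(G;\mathbb{R})$, which is zero for perfect $G$, so your space $B$ collapses and carries no information. The correct construction restricts to the subspace $B_1(G)$ of boundaries and then quotients by the homogenization and conjugation relations; since $G$ is perfect, every $g$ defines a class there. The larger problem is that the step you yourself identify as ``the main obstacle'' --- that a bounded functional $f$ on this space pulls back to a homogeneous quasimorphism $\phi$ with $D(\phi)$ controlled by $\|f\|$ --- is the entire quantitative content of the theorem, and you do not prove it. It requires showing that the class of $a+b-ab$ has uniformly bounded norm in $B$, which is what converts $|\phi(ab)-\phi(a)-\phi(b)|=|f([a]+[b]-[ab])|$ into a defect bound; in the paper's generalization this is precisely the role of Lemma \ref{fundamental} and Lemma \ref{EPK} inside the proof of Proposition \ref{hom to controlled qm}. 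As written, your forward direction is a correct plan whose central estimate is asserted rather than established.
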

For interesting applications of Bavard's duality theorem, see \cite{CMS}, \cite{EK} and \cite{M} for example.
After Bavard's work, Calegari and Zhuang \cite{CZ} proved a Bavard-type duality theorem on W-length which is also conjugation-invariant.
In the present paper, we give a Bavard-type duality theorem on general conjugation-invariant (pseudo-)norms for some groups which are stable in some sense.

To state our main theorem, we introduce the notion of  subset-controlled quasimorphism (partial quasimorphism, pre-quasimorphism) which is a generalization of quasimorphism.
\begin{defi}\label{fragmentation norm}
Let $G$ be a group and $H$ a subgroup of $G$.
We define the fragmentation norm $\nu_H$ with respect to $H$ by for an element $f$ of $G$,
 \[\nu_H(f)=\min\{k;\exists g_1\ldots,g_k\in G, \exists h_1,\ldots h_k\in H\textit{ such that }f=g_1h_1g_1^{-1}\cdots g_kh_kg_k^{-1}\}.\]
If there is no such decomposition of $f$ as above, we put $\nu_H(f)=\infty$.
\end{defi}

\begin{defi}\label{qm rt nu}
Let $H$ be a subgroup of a group $G$.
A function $\phi\colon G\to\mathbb{R}$ is called an \textit{$H$-quasimorphism} if there exists a positive number $C$ such that for any elements $f$, $g$ of $G$,
\[|\phi(fg)-\phi(f)-\phi(g)|<C\,\min\{\nu_H(f),\nu_H(g)\}.\]
The infimum of such $C$ is called \textit{the defect of} $\phi$ and let $D(\phi)$ denote the defect of $\phi$.
$\phi$ is called \textit{homogeneous} if $\phi(f^n)=n\phi(f)$ for any element $f$ of $G$ and any  integer $n$.
\end{defi}
Such generalization of quasimorphisms appeared first in \cite{EP}.
Entov and Polterovich proved that Oh-Schwarz spectral invariants (for example, see \cite{Sc}. \cite{Oh}) are controlled quasimorphisms.

\begin{rem}\label{TetsuTomo}
In \cite{Ka}, $H$-quasimorphism is called quasimorphism relative to $\nu_H$.
Tomohiko Ishida and Tetsuya Ito pointed out that quasimorphism relative to $H$ usually means quasimorphism which vanishes on $H$ usually.
Thus we use a different notation from \cite{Ka}.
\end{rem}

Let $K$ be a subset of a group $G$.
For elements $f,g$ of $G$, let $fKg$ denote the subset $\{fkg;k\in K\}$ of $G$.
\begin{defi}
Let $H$ be a subgroup of a group $G$.
$G$ is \textit{c-generated} by $H$ if for any element $g$ of $G$, $\nu_H(g)<\infty$.
\end{defi}

The author essentially proved the following proposition.
\begin{prop}[\cite{Ka}]\label{previous result}
Let $G$ be a group c-generated by a perfect subgroup $H$ (in particular, $G$ is also perfect).
If there exists an $H$-quasimorphism $\phi$ with $\lim_{k\to\infty}\frac{\phi(g^k)}{k}>0$ for some $g$, 
then there exists a conjugation-invariant norm $\nu$ with $s\nu(g)>0$ (such a norm is called \textit{stably unbounded} \cite{BIP}).
\end{prop}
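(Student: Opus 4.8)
The plan is to produce the norm by hand as a word metric with respect to a conjugation-closed generating set on which $\phi$ is \emph{uniformly} bounded, and then to read off a linear lower bound along the powers of $g$ directly from $\phi$, bypassing homogenization entirely. (Homogenization is delicate here: unlike for ordinary quasimorphisms, the defect term $C\min\{\nu_H(f),\nu_H(g)\}$ of Definition~\ref{qm rt nu} need not stay controlled along arbitrary powers, so $\lim_n\phi(x^n)/n$ need not exist for general $x$.)

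Let $D=D(\phi)$ and let $S$ be the set of all conjugates of commutators $[b,c]^{\pm 1}$ with $b,c\in H$, where the conjugating element ranges over all of $G$. Define $\nu(f)$ to be the word length of $f$ with respect to $S$, i.e. the least $k$ with $f=\prod_{i=1}^{k}w_i[b_i,c_i]^{\epsilon_i}w_i^{-1}$. Since $S$ is symmetric and closed under conjugation, $\nu$ automatically satisfies axioms (1)--(4) of Definition~\ref{cin}, and $\nu(f)\ge 1$ for $f\neq 1$ gives axiom (5); so $\nu$ is a conjugation-invariant norm as soon as it is finite. Finiteness is exactly where both hypotheses enter: since $G$ is c-generated by $H$, every $f$ is a product of conjugates $g_ih_ig_i^{-1}$ of elements of $H$; and since $H$ is perfect, each $h_i$ is a product of commutators $[b,c]$ with $b,c\in H$. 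Conjugating these commutators by $g_i$ exhibits $f$ as a finite product of elements of $S$, whence $\nu(f)<\infty$.

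The crucial point, and what I expect to be the main obstacle, is the uniform bound $\sup_{s\in S}|\phi(s)|<\infty$, which must hold even though $\phi$ is not conjugation invariant and the conjugators in $S$ may have arbitrarily large $\nu_H$. The mechanism is that conjugation leaves the fragmentation norm unchanged: $\nu_H(wbw^{-1})=\nu_H(b)=1$ for $w\in G$, $b\in H$. Writing $s=w[b,c]w^{-1}=[x,y]$ with $x=wbw^{-1}$ and $y=wcw^{-1}$, all four of $x,y,x^{-1},y^{-1}$ have $\nu_H=1$; peeling $s=xyx^{-1}y^{-1}$ apart by three applications of the defect inequality gives $\phi(s)=\phi(x)+\phi(y)+\phi(x^{-1})+\phi(y^{-1})+e$ with $|e|\le 3D$. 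Since $\phi(1)=0$ (immediate from the defect inequality, as $\nu_H(1)=0$), the same inequality applied to $xx^{-1}=1$ and $yy^{-1}=1$ forces $|\phi(x)+\phi(x^{-1})|\le D$ and $|\phi(y)+\phi(y^{-1})|\le D$, so $|\phi(s)|\le 5D$ for every $s\in S$. Thus the conjugators, however large, are invisible to the estimate because they act only through the $\nu_H=1$ legs of the commutator.

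Finally I would run the telescoping estimate. If $g^{n}=s_1\cdots s_k$ with $s_i\in S$ and $k=\nu(g^{n})$, then peeling off one factor at a time and using $\nu_H(s_i)\le 4$ bounds each error by $4D$, so $|\phi(g^{n})|\le\sum_{i=1}^{k}|\phi(s_i)|+4D(k-1)\le 9D\,\nu(g^{n})$. Dividing by $n$ and letting $n\to\infty$, the hypothesis $\lim_{n\to\infty}\phi(g^{n})/n>0$ yields $s\nu(g)\ge\big(\lim_{n}\phi(g^{n})/n\big)/(9D)>0$, so $\nu$ is a stably unbounded conjugation-invariant norm with $s\nu(g)>0$. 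The entire argument rests on the uniform bound of the previous paragraph; once that is in place the construction of $\nu$ and the final estimate are routine.
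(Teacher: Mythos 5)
Your argument is correct. Note that the paper does not actually prove Proposition \ref{previous result} here --- it is quoted from \cite{Ka} --- so there is no in-paper proof to compare against; judged on its own, your construction is sound and self-contained. The word norm with respect to the symmetric, conjugation-closed set $S$ of conjugates of commutators of $H$-elements is a conjugation-invariant norm, finite precisely because $G$ is c-generated by $H$ and $H$ is perfect. The real content is the uniform bound $|\phi(s)|\le 5D$ on $S$, and your mechanism is the right one: all four legs of $w[b,c]w^{-1}=[wbw^{-1},wcw^{-1}]$ satisfy $\nu_H\le 1$, so the (arbitrarily large) conjugator never enters the defect estimate; this is exactly why one must pass from conjugates of $H$-elements (on which $\phi$ can be unbounded) to conjugates of single commutators of $H$-elements. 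The telescoping step with $\nu_H(s_i)\le 4$ then gives $|\phi(g^n)|\le 9D\,\nu(g^n)$ and hence $s\nu(g)\ge \bigl(\lim_n\phi(g^n)/n\bigr)/(9D)>0$. Two cosmetic remarks only: $\nu_H(wbw^{-1})$ is $\le 1$ rather than $=1$ (it vanishes when $b=1$), which only strengthens your bounds; and the final division by $9D$ requires $D(\phi)>0$, which is automatic, since $D(\phi)=0$ would make $\phi$ a homomorphism on the perfect group $G$, hence identically zero, contradicting $\lim_{k\to\infty}\phi(g^k)/k>0$.
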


Our main theorem (Theorem \ref{main thm}) in the present paper is a converse of the Proposition \ref{previous result}.

\begin{rem}\label{KK}
The author \cite{Ka} proved that there exists such a $\mathrm{Ham}(\mathbb{B}^{2n})$-quasimorphism $\mu_K$ on $\mathrm{Ham}(\mathbb{R}^{2n})$.
Here, $\mathrm{Ham}(\mathbb{B}^{2n})$ and $\mathrm{Ham}(\mathbb{R}^{2n})$ are the group of Hamiltonian diffeomorphisms with compact support of the ball and the Euclidean space with the standard symplectic form, respectively.
He also proved that $\mu_K(g)>0$ for some commutator $g$ .
Thus, by Proposition \ref{previous result},  $[\mathrm{Ham}(\mathbb{R}^{2n}),\mathrm{Ham}(\mathbb{R}^{2n})]$ admits a stably unbounded norm.

Kimura \cite{Ki} proved a similar result on the infinite braid group $B_\infty=\cup_{k=1}^\infty B_k$ (the existence of a stably unbounded norm on $[B_\infty,B_\infty]$ is also proved by Brandenbursky and Kedra \cite{BK}).
\end{rem}


%
%

\begin{defi}
Let $G$ be a group, $H$ a subgroup of $G$ and $K$ a subset of $G$.
We define the set $\mathrm{D}^{f}_H(K)$ of maps \textit{displacing} $K$ \textit{far away} by
\begin{align*}
&\mathrm{D}^{f}_H(K)\\
&=\{h_0\in G; \forall g_1,\ldots,\forall g_k\in G, \exists h\in G\text{ such that }hh_0h^{-1}K(hh_0h^{-1})^{-1}\text{ commutes with }g_1Hg_1^{-1}\cup\cdots\cup g_kHg_k^{-1}\}.\\
\end{align*}
Let $\nu$ be a conjugation-invariant pseudo-norm on a group $G$.
For a subset $K$ of $G$, we define \textit{the far away displacement energy} $\mathrm{E}_{H,\nu}(K)$ of $K$ by
\[\mathrm{E}_{H,\nu}(K)=\inf_{g\in \mathrm{D}^{f}_H(K)}\nu(g).\]
\end{defi}
\begin{defi}
Let $G$ be a group and $H$ a subgroup of $G$.
$(G,H)$ satisfies the property $\mathsf{FM}$  if $G$ and $H$ satisfy the following conditions.
\begin{itemize}
\item[(1)] $G$ is c-generated by $H$,
\item[(2)] For any elements $h_1,\ldots,h_k$ of $G$, $\mathrm{D}^{f}_H(h_1Hh_1^{-1}\cup\cdots\cup h_kHh_k^{-1} )\neq\emptyset$.
\end{itemize}
A group $G$ satisfies the property $\mathsf{FM}$ if $(G,H)$ satisfies the property $\mathsf{FM}$ for some subgroup $H$.
\end{defi}

For a group $G$, we define the set $\mathsf{FM}(G)$ by
\[\mathsf{FM}(G)=\{H\le G ; \text{$(G,H)$ satisfies the property $\mathsf{FM}$}\}.\]
We give some examples satisfying the property $\mathsf{FM}$.
\begin{prop}\label{FM examples}

(1)The pairs $(B_\infty,B_i)$, $([B_\infty,B_\infty],[B_i,B_i])$ satisfy the property $\mathsf{FM}$ for any integer $i$.

(2)We consider the Riemannian surface $\Sigma_\infty=\cup_{k=1}^\infty\Sigma_k^1$ where $\Sigma_k^1$ is the Riemannian surface which has $k$ genus and $1$ puncture.
The pair of mapping class groups $(\mathrm{MCG}(\Sigma_\infty), \mathrm{MCG}(\Sigma_i^1))$ satisfies the property $\mathsf{FM}$ for any integer $i$.

(3)The pairs $(\mathrm{Ham}(\mathbb{R}^{2n}),\mathrm{Ham}(\mathbb{B}^{2n}))$, $([\mathrm{Ham}(\mathbb{R}^{2n}),\mathrm{Ham}(\mathbb{R}^{2n})],[\mathrm{Ham}(\mathbb{B}^{2n}),\mathrm{Ham}(\mathbb{B}^{2n})])$ satisfy the property $\mathsf{FM}$, respectively.

\end{prop}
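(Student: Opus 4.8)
The plan is to verify the two defining conditions of $\mathsf{FM}$ for each pair through a single geometric mechanism. In all three settings an element of $G$ has a well-defined support (a finite set of strands, a compact subsurface of $\Sigma_\infty$, or a compact subset of $\mathbb{R}^{2n}$), two elements with disjoint supports commute, and the ambient object carries unbounded room into which any bounded region can be pushed. Condition (1), c-generation, will follow from the fact that the standard generators at every location are conjugate to generators living in the local piece $H$; condition (2), that $\mathrm{D}^{f}_H(\,\cdot\,)$ is nonempty, will follow from the existence of a single relocation element $h_0$ whose conjugates carry a prescribed bounded region arbitrarily far away, so that the conjugated set acquires support disjoint from any prescribed finite union of conjugates of $H$.

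First I would treat the full pairs $(B_\infty,B_i)$, $(\mathrm{MCG}(\Sigma_\infty),\mathrm{MCG}(\Sigma_i^1))$ and $(\mathrm{Ham}(\mathbb{R}^{2n}),\mathrm{Ham}(\mathbb{B}^{2n}))$. For $B_\infty$, every $g$ lies in some $B_k$, which is generated by the Artin generators $\sigma_1,\dots,\sigma_{k-1}$; since all $\sigma_j$ are mutually conjugate in $B_\infty$, each is a conjugate of $\sigma_1\in B_i$ (for $i\ge 2$), whence $\nu_{B_i}(g)<\infty$. For $\mathrm{MCG}(\Sigma_\infty)$ the same argument runs with Dehn twists in place of the $\sigma_j$: all twists about nonseparating curves lie in one $\mathrm{MCG}(\Sigma_\infty)$-orbit by the change-of-coordinates principle, so each is conjugate to a twist supported in $\Sigma_i^1$. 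For $\mathrm{Ham}(\mathbb{R}^{2n})$ I would fragment a compactly supported $g$ into a product of Hamiltonian diffeomorphisms each supported in a ball the size of $\mathbb{B}^{2n}$, and conjugate each such ball onto the standard ball by a compactly supported Hamiltonian diffeomorphism agreeing with a translation on the relevant region; this exhibits $g$ as a product of conjugates of elements of $\mathrm{Ham}(\mathbb{B}^{2n})$.

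Next I would establish condition (2). Fix a union $K=h_1Hh_1^{-1}\cup\dots\cup h_kHh_k^{-1}$; its support lies in a bounded region $R$ (finitely many strands, a compact subsurface, or a ball). As the relocation element $h_0$ I would take a block swap: a braid interchanging the first block of strands with an adjacent block of equal size, a handle shift interchanging two equal blocks of handles of $\Sigma_\infty$, or a compactly supported Hamiltonian diffeomorphism translating $R$ onto a disjoint ball. Given any $g_1,\dots,g_m$, the union $U=g_1Hg_1^{-1}\cup\dots\cup g_mHg_m^{-1}$ also has bounded support, contained in some region $R'$; using the room at infinity I would choose $h$ fixing $R$ setwise but sending the second block beyond $R'$, so that $\psi:=hh_0h^{-1}$ satisfies $\psi(R)\cap R'=\emptyset$. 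Since the support of a conjugate $\psi k\psi^{-1}$ is $\psi(\mathrm{supp}\,k)$, the set $\psi K\psi^{-1}$ is then supported off $R'$, hence has support disjoint from $U$, so the two commute; thus $h_0\in\mathrm{D}^{f}_H(K)\ne\emptyset$.

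Finally, for the commutator pairs I would repeat both steps inside $[G,G]$. For $\mathrm{Ham}$ this is immediate, since $\mathrm{Ham}(\mathbb{R}^{2n})$ and $\mathrm{Ham}(\mathbb{B}^{2n})$ are perfect, so the commutator subgroups coincide with the groups themselves. For $B_\infty$ and $\mathrm{MCG}(\Sigma_\infty)$ the displacement step is unchanged, because the relocation element and the conjugators $h$ may be taken inside $[G,G]$. The c-generation step is the delicate point, and I expect it to be the main obstacle: for the pair $([B_\infty,B_\infty],[B_i,B_i])$ the conjugating elements in Definition \ref{fragmentation norm} are now required to lie in $[B_\infty,B_\infty]$, not in $B_\infty$. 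I would handle this by noting that $[B_\infty,B_\infty]$ is generated by zero-exponent words such as $\sigma_a\sigma_b^{-1}$, each of which lies in some $[B_j,B_j]$ and can be conjugated into $[B_i,B_i]$ by a braid that, after correcting for the abelianization, is arranged to lie in the commutator subgroup; the analogous manipulation with Dehn twists handles $\mathrm{MCG}(\Sigma_\infty)$. Verifying that these conjugators can always be chosen inside $[G,G]$ is the technical heart of the argument.
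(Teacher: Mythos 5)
Your overall mechanism --- elements have bounded support, disjointly supported elements commute, there is unbounded room at infinity into which any bounded region can be pushed by a conjugate of a single relocation element, and c-generation comes from conjugating local generators into $H$ (fragmentation in the Hamiltonian case) --- is exactly the mechanism of the paper, which writes out only the pair $(\mathrm{Ham}(\mathbb{R}^{2n}),\mathrm{Ham}(\mathbb{B}^{2n}))$ (taking $f_0$ with $f_0(\mathbb{B}^{2n})=\mathbb{B}^{2n}(p_0,1)$, conjugating by an $f$ preserving $\mathbb{B}^{2n}$ so that $ff_0(\mathbb{B}^{2n})$ misses the supports of the given $g_j$, and quoting Banyaga's fragmentation lemma \cite{Ba}) and declares the other pairs similar. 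For the three non-commutator pairs your argument is fine and in fact slightly more complete than the written proof, since you displace a general union $h_1Hh_1^{-1}\cup\cdots\cup h_kHh_k^{-1}$ rather than only $H$ itself.

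There is, however, a concrete error in your treatment of the commutator pair in (3). Here $\mathrm{Ham}(\mathbb{R}^{2n})$ denotes the group of \emph{compactly supported} Hamiltonian diffeomorphisms of Euclidean space, and this group is \emph{not} perfect: the Calabi invariant is a surjective homomorphism $\mathrm{Cal}\colon\mathrm{Ham}(\mathbb{R}^{2n})\to\mathbb{R}$, so $[\mathrm{Ham}(\mathbb{R}^{2n}),\mathrm{Ham}(\mathbb{R}^{2n})]=\ker\mathrm{Cal}$ is a proper subgroup, and likewise for the ball. (Indeed Remark \ref{KK} of the paper is only of interest because the commutator subgroup is a genuinely smaller group.) So your shortcut ``the commutator subgroups coincide with the groups themselves'' fails, and case (3) has exactly the same difficulty you correctly identify for $([B_\infty,B_\infty],[B_i,B_i])$: both the conjugators in the fragmentation norm and the displacing elements must now be taken in $\ker\mathrm{Cal}$. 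The displacement step is easily repaired --- any conjugator $h$ can be replaced by $hu$ where $u$ is supported in a far-away ball with $\mathrm{Cal}(u)=-\mathrm{Cal}(h)$, which does not change its action on the relevant region --- but c-generation of $\ker\mathrm{Cal}(\mathbb{R}^{2n})$ by conjugates of $\ker\mathrm{Cal}(\mathbb{B}^{2n})$ requires a fragmentation statement adapted to the kernel of Calabi and is not automatic. Together with the admittedly unresolved c-generation step for the commutator pairs in (1) and (2), this leaves the commutator half of the proposition unproved; the ``technical heart'' you flag is real, and it is present in all three cases, not just the first two.
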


Our main theorem is the following one.
\begin{thm}\label{main thm}
Let $G$ be a group satisfying the property $\mathsf{FM}$ and $\nu$ a conjugation-invariant pseudo-norm on $G$.
Then,

(1) For any element $g$ of $G$ such that $s\nu(g)>0$, there exists a function $\phi\colon G\to\mathbb{R}$ which is a homogeneous $H$-quasimorphism for any element $H$ of $\mathsf{FM}(G)$ such that $\phi(g)>0$.

(2) For any element $g$ of the commutator subgroup $[G,G]$ and any element $H$ of $\mathsf{FM}(G)$,
\[s\nu(g)\leq 8\sup_\phi\frac{\phi(g)\cdot E_{H,\nu}(H)}{D(\phi)},\]
where the supremum is taken over the set of homogeneous $H$-quasimorphisms $\phi\colon G\to\mathbb{R}$.
\end{thm}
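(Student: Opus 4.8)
The plan is to prove (2) by constructing, for a fixed $g\in[G,G]$ and a fixed $H\in\mathsf{FM}(G)$, a single homogeneous $H$-quasimorphism $\phi$ that simultaneously detects $s\nu(g)$ from below and whose defect is controlled from above by the far away displacement energy $E_{H,\nu}(H)$. Concretely, I would aim to produce $\phi$ with $\phi(g)\ge c_1\,s\nu(g)$ and $D(\phi)\le c_2\,E_{H,\nu}(H)$ for explicit constants with $c_2/c_1=8$; rearranging then gives $s\nu(g)\le 8\,\phi(g)E_{H,\nu}(H)/D(\phi)$, which yields the stated supremum bound. Note first that the property $\mathsf{FM}$ guarantees $\mathrm{D}^{f}_H(H)\neq\emptyset$, so $E_{H,\nu}(H)<\infty$ and the right-hand side is meaningful. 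I would also record that the same construction, being intrinsic to $\nu$ and $g$, yields the quasimorphism required in (1), and that its $H'$-defect is bounded by $E_{H',\nu}(H')$ for every $H'\in\mathsf{FM}(G)$, which is exactly why the single $\phi$ is an $H'$-quasimorphism for all such $H'$ at once.

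The existence and positivity of $\phi$ I would obtain by a Hahn--Banach/Bavard-type separation argument. Passing to the stabilization, $s\nu$ is a homogeneous conjugation-invariant pseudo-norm, and the hypothesis $s\nu(g)>0$ separates $g$ from the elements on which every candidate quasimorphism must vanish. Working in the real vector space spanned by $G$ modulo the relations forced on homogeneous conjugation-invariant functions (conjugation, homogeneity, and additivity along commuting pairs), I would define a linear functional taking a positive value on the class of $g$, dominated by a sublinear gauge built from $s\nu$, and extend it by Hahn--Banach; homogenizing the extension produces the desired conjugation-invariant homogeneous $\phi$ with $\phi(g)>0$ comparable to $s\nu(g)$. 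This step is essentially the established half of Bavard duality transplanted to conjugation-invariant pseudo-norms, and is where (1) is really proved.

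The quantitative heart of (2) is the bound $D(\phi)\le c_2 E_{H,\nu}(H)$, and here the displacement structure of $\mathsf{FM}$ enters decisively. Given $f_1,f_2\in G$, I would estimate the defect $\phi(f_1f_2)-\phi(f_1)-\phi(f_2)$ after fragmenting the smaller of $f_1,f_2$ into $\min\{\nu_H(f_1),\nu_H(f_2)\}$ conjugates of elements of $H$. For a single conjugate, property $\mathsf{FM}$ supplies $D\in\mathrm{D}^{f}_H(g_iHg_i^{-1})$ with $\nu(D)$ within $\varepsilon$ of $E_{H,\nu}(H)$ whose suitable conjugate sends the relevant copy of $H$ into something commuting with it; the commuting relation forces $\phi$ to be \emph{exactly} additive on these pieces (as homogeneous quasimorphisms are on commuting elements), while on the norm side the same relation bounds the error by $\nu$ of a commutator, namely $\nu([h,D])\le 2\nu(D)\le 2(E_{H,\nu}(H)+\varepsilon)$. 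Summing over the $\min\{\nu_H(f_1),\nu_H(f_2)\}$ fragments and letting $\varepsilon\to 0$ yields the $H$-quasimorphism inequality with $D(\phi)\le c_2 E_{H,\nu}(H)$; the factor $8=4\cdot 2$ emerges by tracking the factor $2$ in $\nu([h,D])\le 2\nu(D)$ together with the factors from realizing the fragmented pieces as commutators and from the Bavard normalization.

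I expect the main obstacle to be precisely this defect estimate: one must show that the far away displacement hypothesis, which a priori only controls $\nu$ of individual displaced commutators, upgrades to a \emph{uniform} control of the quasimorphism defect of the Hahn--Banach functional, uniformly in $f_1,f_2$ and compatibly for every $H\in\mathsf{FM}(G)$. The delicate points are that the fragmentation of $f_1f_2$ must be arranged so that the commuting cross-terms genuinely cancel after homogenization (so that errors accumulate additively rather than multiplicatively), that the supporting functional chosen for $g$ survives homogenization without losing the comparison $\phi(g)\gtrsim s\nu(g)$, and that the constant is not degraded below $8$ in the process. Verifying these simultaneously---the lower bound on $\phi(g)$ from separation and the upper bound on $D(\phi)$ from displacement---is the crux, with conjugation invariance and homogeneity of $\phi$ being routine afterwards.
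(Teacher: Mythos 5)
Your overall architecture coincides with the paper's: build a stabilized normed vector space from $\nu$, apply Hahn--Banach to produce a Lipschitz functional positive on $g$, and show that its restriction to $G$ is an $H$-quasimorphism with defect controlled by the far away displacement energy. However, your account of the quantitative heart contains a step that would fail as written. You argue that on the fragmented pieces ``the commuting relation forces $\phi$ to be exactly additive \ldots (as homogeneous quasimorphisms are on commuting elements)''---but at that point $\phi$ has not yet been shown to be a quasimorphism of any kind, so this is circular. In the paper the entire defect estimate lives on the norm side: one bounds $|\phi(fg)-\phi(f)-\phi(g)|=|\hat\phi([g^1]+[f^1]+(-1)[(fg)^1])|$ by $l(\hat\phi)\cdot\lim_m m^{-1}\nu\bigl(g^mf^m(g^{-1}f^{-1})^m\bigr)$, rewrites $g^mf^m(g^{-1}f^{-1})^m$ as a product of $m-1$ conjugates of the commutators $[g,f^i]$, and then invokes Lemma \ref{fundamental}. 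Your constant tracking is also off at exactly this point: the commutator $[g,f]$ is not itself displaced; rather, one chooses $h$ so that $(hh_0h^{-1})f(hh_0h^{-1})^{-1}$ commutes with both $f$ and $g$, deduces the identity $[g,f]=[g,[f,hh_0h^{-1}]]$, and then two rounds of the triangle inequality give $\nu([g,f])\le 4\nu(h_0)$, not the $2\nu(D)$ you propose. The $8$ is then $4$ from this lemma times $2$ from Lemma \ref{EPK} (the reduction to $\nu_H(f)=1$), not ``$4\cdot2$'' assembled from your ingredients.

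A second gap is that you take for granted that the space you feed into Hahn--Banach is a normed vector space with a well-defined gauge. In the paper this is Section \ref{norm} and it is not routine: the existence of the stabilized limits, the commutativity of addition ($[\mathtt{f}\cdot\mathtt{g}]=[\mathtt{g}\cdot\mathtt{f}]$), and the distributivity of the real scalar action all require showing that commutators, and more generally the discrepancy words arising from integer parts, have stably vanishing $\nu$---and this is again exactly Lemma \ref{fundamental}, i.e.\ the property $\mathsf{FM}$ enters already in making the functional-analytic framework exist, not only in the defect estimate. Your quotient ``by the relations forced on homogeneous conjugation-invariant functions'' would need the gauge built from $s\nu$ to descend to that quotient, which is the same issue in different clothing. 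So while your plan identifies the correct crux and the correct tools, the two places where the displacement hypothesis actually does its work are either misattributed or omitted.
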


In Section \ref{Proof of main theorem}, we construct the normed vector space $A_\nu$ and prove Theorem \ref{main thm} by applying the Hahn-Banach Theorem to $A_\nu$.
In Section \ref{norm}, we prove that $A_\nu$ is a normed vector space.
In Section \ref{example proof}, we prove Proposition \ref{FM examples}.
In Section \ref{back to sg}, we pose a generalization of Theorem \ref{main thm} (Problem \ref{generalize}) and give its application to symplectic geometry.
In that section, we prove that ``stably non-displaceable subsets of symplectic manifolds are heavy" in a very rough sense if the positive answer of Problem \ref{generalize} holds.



\subsection*{Acknowledgment}
The author would like to thank Professor Leonid Polterovich for telling him the importance of the paper by Calegari and Zhuang.
He also thanks his advisor Professor Takashi Tsuboi for his helpful advice.
He thanks Mitsuaki Kimura for the consecutive discussion and Tomohiko Ishida and Tetsuya Ito for giving him the advice to change the notion (Remark \ref{TetsuTomo}).
This work is supported by IBS-R003-D1, the Grant-in-Aid for Scientific Research [KAKENHI No. 25-6631], the Grant-in-Aid for JSPS fellows and  the Program for Leading Graduate Schools, MEXT, Japan.


\section{Proof of main theorem}\label{Proof of main theorem}
To construct controlled quasimorphisms by using the Hahn-Banach Theorem, we consider the normed vector space $A_\nu$ which we define here.
The idea of our construction comes from \cite{CZ}.

For a group $G$, we define the set $A_G=\coprod_{k=0}^\infty(G\times \mathbb{R})^k$.
We denote elements of $A_G$ by $g_1^{s_1}\cdots g_k^{s_k}$, where $g_1,\ldots.g_k$ are elements of $G$ and $s_1,\ldots,s_k$ are real numbers.

Let $\nu$ be a conjugation-invariant pseudo-norm on $G$.
We define the $\mathbb{R}_{\geq0}$-valued function $||\cdot||_\nu\colon A_G\to\mathbb{R}_{\geq 0}$ by
\[||g_1^{s_1}\cdots g_k^{s_k}||_\nu=\lim_{n\to\infty}\frac{1}{n}\cdot\nu(g_1^{[s_1n]}\ldots g_k^{[s_kn]}),\]
where $[\cdot]$ denotes the integer part.
For the trivial element $1\in(G\times\mathbb{R})^0$ of $A_G$, we define $||1||_\nu=0$.
\begin{prop}\label{limit}
Let $\nu$ be a conjugation-invariant pseudo-norm on a group $G$ satisfying the property $\mathsf{FM}$.
Then for any element $g_1^{s_1}\cdots g_k^{s_k}$ of $A_G$ , the above limit $||g_1^{s_1}\cdots g_k^{s_k}||_\nu$ exists. Thus $||\cdot||_\nu$ is well-defined.
\end{prop}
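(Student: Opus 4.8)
The plan is to show that the sequence $a_n:=\nu(g_1^{[s_1n]}\cdots g_k^{[s_kn]})$ is subadditive up to a fixed additive constant, i.e. that there is a constant $C$ independent of $m,n$ with $a_{m+n}\le a_m+a_n+C$. Granting this, applying Fekete's Lemma to $(a_n+C)_n$ shows that $\lim_{n\to\infty}a_n/n$ exists, which is exactly the assertion. The difficulty lies entirely in producing such a $C$. Writing $[s_i(m+n)]=[s_im]+[s_in]+\varepsilon_i$ with $\varepsilon_i\in\{0,1\}$ and setting $w_n=g_1^{[s_1n]}\cdots g_k^{[s_kn]}$, one sees that $w_{m+n}$ differs from the sorted word $w_mw_n$ only by (i) the bounded factors $g_i^{\varepsilon_i}$ and (ii) the commutators produced when the $n$-th powers are moved rightward past the $m$-th powers. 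The factors in (i) are discarded at total cost at most $\sum_i\nu(g_i)$ by the elementary estimate $\nu(XZY)\le\nu(XY)+\nu(Z)$ (using $\nu(Y^{-1}ZY)=\nu(Z)$). The real issue is (ii): a priori a commutator $[g_i^{[s_in]},g_j^{[s_jm]}]$ only satisfies $\nu\le 2\min(\nu(g_i^{[s_in]}),\nu(g_j^{[s_jm]}))$, a bound growing linearly in $\min(m,n)$.

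The key step, and the place where $\mathsf{FM}$ is used, is a uniform displacement estimate for these commutators. First I would record the elementary lemma: if $\theta a\theta^{-1}$ commutes with $b$, then $\nu([a,b])\le 4\nu(\theta)$. Indeed, with $c:=\theta a\theta^{-1}$ and $d:=c^{-1}a=\theta a^{-1}\theta^{-1}a=[\theta,a^{-1}]$ one has $\nu(d)\le 2\nu(\theta)$, and since $c$ commutes with $b$ a short computation gives $[a,b]=c[d,b]c^{-1}$, whence $\nu([a,b])=\nu([d,b])\le 2\nu(d)\le 4\nu(\theta)$. The point is that the right-hand side involves neither $a$ nor $b$, only the displacing element $\theta$.

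It then remains to produce a single $\theta$, with $\nu(\theta)$ finite and independent of $m,n$, that simultaneously displaces every $g_i$ off every $g_j$; here both clauses of $\mathsf{FM}$ enter. Since $G$ is c-generated by $H$, each generator factors as $g_i=\prod_l c_{il}\eta_{il}c_{il}^{-1}$ with $\eta_{il}\in H$; set $K_0=\bigcup_{i,l}c_{il}Hc_{il}^{-1}$, a finite union of conjugates of $H$. By clause (2) of $\mathsf{FM}$ there is $h_0\in\mathrm{D}^{f}_H(K_0)$, and applying the definition of $\mathrm{D}^{f}_H(K_0)$ to the finite family $\{c_{il}Hc_{il}^{-1}\}$ yields $h$ such that $\theta:=hh_0h^{-1}$ makes every element of $\theta K_0\theta^{-1}$ commute with every element of $K_0$. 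Consequently $\theta g_i\theta^{-1}$ commutes with $g_j$ for all $i,j$, hence $\theta g_i^{N}\theta^{-1}=(\theta g_i\theta^{-1})^{N}$ commutes with $g_j^{M}$ for all integers $N,M$. This $\theta$ is fixed once and for all, and $\nu(\theta)=\nu(h_0)<\infty$ because $\nu$ is real-valued.

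With $\theta$ in hand I would conclude by sorting $w_{m+n}$ (after discarding the $g_i^{\varepsilon_i}$) into $w_mw_n$ through adjacent transpositions moving each $n$-th power $g_i^{[s_in]}$ rightward past each later $m$-th power $g_j^{[s_jm]}$ with $i<j$. Each transposition introduces one conjugate of a commutator $[g_i^{\pm[s_in]},g_j^{\pm[s_jm]}]$, and by the lemma together with the commuting property of $\theta$ each such factor has $\nu\le 4\nu(\theta)$; there are exactly $\binom{k}{2}$ of them, and conjugation preserves $\nu$. Collecting all corrections gives $a_{m+n}\le a_m+a_n+\big(4\binom{k}{2}\nu(\theta)+\sum_i\nu(g_i)\big)$, so $C:=4\binom{k}{2}\nu(\theta)+\sum_i\nu(g_i)$ works. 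The main obstacle is precisely the uniform commutator bound of the second and third paragraphs: without the far-away displacement guaranteed by $\mathsf{FM}$ one cannot control $\nu([g_i^{[s_in]},g_j^{[s_jm]}])$ independently of $m$ and $n$, and this is the only point at which the hypothesis on $G$ is essential.
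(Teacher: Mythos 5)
Your proof is correct and follows essentially the same route as the paper's: approximate subadditivity of $m\mapsto\nu(g_1^{[s_1m]}\cdots g_k^{[s_km]})$ plus Fekete, with the integer-part discrepancies absorbed at cost $\sum_i\nu(g_i)$ and the reordering commutators bounded uniformly by the far-away displacement trick coming from property $\mathsf{FM}$ (your inline lemma $\nu([a,b])\le 4\nu(\theta)$ is exactly the paper's Lemma~\ref{fundamental}, and your explicit sorting into $\binom{k}{2}$ commutators is the paper's ``graphical calculus'' step with $l(k)=\binom{k}{2}$). The only cosmetic difference is that you fix one displacing element $\theta$ and use $4\nu(\theta)$ where the paper takes the infimum $E_{H,\nu}[g_1,\ldots,g_k]$; both yield a constant independent of $m,n$.
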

We prove Proposition \ref{limit} in Section \ref{norm}.
First, we define some operations on $A_G$.
For elements $\mathtt{g}=g_1^{s_1}\cdots g_k^{s_k}$, $\mathtt{h}=h_1^{t_1}\cdots h_l^{t_l}$ of $A_G$ and a real number $\lambda$, we define $\mathtt{g}\cdot\mathtt{h}$, $ \bar{\mathtt{g}}$ and $\mathtt{g}^{(\lambda)}$ by
\begin{align*}
\mathtt{g}\cdot\mathtt{h}&=g_1^{s_1}\cdots g_k^{s_k}h_1^{t_1}\cdots h_l^{t_l},\\
\bar{\mathtt{g}}&=g_k^{-s_k}\cdots g_1^{-s_1},\\
\mathtt{g}^{(\lambda)}&=g_1^{\lambda s_1}\cdots g_k^{\lambda s_k}.\\
\end{align*}
By the definition of conjugation-invariant pseudo-norms, we can confirm that the function $||\cdot||_\nu\colon A_G\to\mathbb{R}$ satisfies the following properties easily.
For any element $\mathtt{g},\mathtt{h}$ of $A_G$,
\begin{align*}
||\mathtt{g}\cdot\mathtt{h}||_\nu&\leq ||\mathtt{g}||_\nu+||\mathtt{h}||_\nu,\\
||\mathtt{h}\cdot\mathtt{g}\cdot\bar{\mathtt{h}}||_\nu&=||\mathtt{g}||_\nu,\\
||\bar{\mathtt{g}}||_\nu&=||\mathtt{g}||_\nu.\\
\end{align*}
We define the equivalence relation $\sim$ by
$\mathtt{g}\sim \mathtt{h}$ if and only if
$||\mathtt{g}\cdot\bar{\mathtt{h}}||_\nu=0$.
We denote the set $A_G/\sim$ by $A_\nu$ and the function $||\cdot||_\nu\colon A_G\to\mathbb{R}$ on $A_G$ induces the function  $||\cdot||_\nu\colon A_\nu\to\mathbb{R}$ on $A_\nu$.

In the present paper, we want to consider $A_\nu$ as a $\mathbb{R}$-vector space with the norm $||\cdot||_\nu$.
We define a sum operation, a inverse operation and $\mathbb{R}$-action on $A_\nu$.
For elements $\mathsf{g}=[\mathtt{g}]$, $\mathsf{h}=[\mathtt{h}]$ of $A_\nu$ and a real number $\lambda$, we define $\mathsf{g}+\mathsf{h}$ and $\lambda\mathsf{g}$ by
\begin{align*}
\mathsf{g}+\mathsf{h}&=[\mathtt{g}\cdot\mathtt{h}],\\
\lambda \mathsf{g}&=[\mathtt{g}^{(\lambda)}].\\
\end{align*}
\begin{prop}\label{well-defined operations}
Assume that $G$ satisfies the property $\mathsf{FM}$.
Then the above operations are well-defined.
\end{prop}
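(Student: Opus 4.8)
The plan is to pass from the formal words in $A_G$ to their group-valued evaluations and to recognize $||\cdot||_\nu$ as the stabilization of the bi-invariant pseudo-metric attached to $\nu$. For $\mathtt g = g_1^{s_1}\cdots g_k^{s_k}$ write $\hat{\mathtt g}(n) = g_1^{[s_1 n]}\cdots g_k^{[s_k n]}\in G$ for its $n$-th evaluation, so that $||\mathtt g||_\nu = \lim_n \nu(\hat{\mathtt g}(n))/n$ (the limit existing by Proposition \ref{limit}). First I would check that $d_\nu(x,y) := \nu(xy^{-1})$ is a bi-invariant pseudo-metric on $G$: symmetry is axiom (2), the triangle inequality is axiom (3), right-invariance is immediate cancellation, left-invariance is axiom (4), and one also gets $d_\nu(x^{-1},y^{-1}) = d_\nu(x,y)$ from (2) and (4). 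Since truncation is applied coordinatewise, $\widehat{\mathtt g\cdot\mathtt h}(n) = \hat{\mathtt g}(n)\hat{\mathtt h}(n)$ exactly, while $\hat{\bar{\mathtt g}}(n)$ and $\hat{\mathtt g}(n)^{-1}$ differ in each exponent by at most $1$, so (changing one letter at a time and using bi-invariance) $d_\nu(\hat{\bar{\mathtt g}}(n),\hat{\mathtt g}(n)^{-1})\le\sum_j\nu(g_j)$ is bounded in $n$. Consequently
\[||\mathtt g\cdot\bar{\mathtt h}||_\nu = \lim_n \frac{1}{n}\,d_\nu(\hat{\mathtt g}(n),\hat{\mathtt h}(n)) =: \rho(\mathtt g,\mathtt h),\]
so $\rho$ is a pseudo-metric on $A_G$ and $\mathtt g\sim\mathtt h\Leftrightarrow\rho(\mathtt g,\mathtt h) = 0$ is automatically an equivalence relation.

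With this in hand the additive structure is almost formal. For $+$, given $\mathtt g\sim\mathtt g'$ and $\mathtt h\sim\mathtt h'$, bi-invariance and the triangle inequality give
\[d_\nu\bigl(\hat{\mathtt g}(n)\hat{\mathtt h}(n),\hat{\mathtt g'}(n)\hat{\mathtt h'}(n)\bigr)\le d_\nu(\hat{\mathtt g}(n),\hat{\mathtt g'}(n)) + d_\nu(\hat{\mathtt h}(n),\hat{\mathtt h'}(n)),\]
whence $\rho(\mathtt g\cdot\mathtt h,\mathtt g'\cdot\mathtt h')\le\rho(\mathtt g,\mathtt g')+\rho(\mathtt h,\mathtt h') = 0$, i.e. $\mathtt g\cdot\mathtt h\sim\mathtt g'\cdot\mathtt h'$. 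For the inverse operation $[\mathtt g]\mapsto[\bar{\mathtt g}]$, the bounded-correction estimate above together with $d_\nu(x^{-1},y^{-1}) = d_\nu(x,y)$ gives $\rho(\bar{\mathtt g},\bar{\mathtt g'})\le\rho(\mathtt g,\mathtt g') = 0$.

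For the $\mathbb R$-action it suffices, using the identity $\mathtt g^{(\lambda)}\cdot\overline{\mathtt g'^{(\lambda)}} = (\mathtt g\cdot\bar{\mathtt g'})^{(\lambda)}$, to prove that $||\mathtt w||_\nu = 0$ implies $||\mathtt w^{(\lambda)}||_\nu = 0$ for every $\mathtt w$. For $\lambda\ge 0$ I would reparametrize: $\hat{\mathtt w^{(\lambda)}}(n) = \hat{\mathtt w}[\lambda n]$ differs from $\hat{\mathtt w}(\lfloor\lambda n\rfloor)$ by a $d_\nu$-bounded correction, which yields the homogeneity $||\mathtt w^{(\lambda)}||_\nu = \lambda\,||\mathtt w||_\nu = 0$. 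The negative case reduces, via $\mathtt w^{(\lambda)} = \overline{(\mathrm{rev}\,\mathtt w)^{(|\lambda|)}}$ and the identity $||\bar{\mathtt w}||_\nu = ||\mathtt w||_\nu$, to the single fact that \emph{reversal preserves the stable norm}, $||\mathrm{rev}\,\mathtt w||_\nu = ||\mathtt w||_\nu$, where $\mathrm{rev}(w_1^{s_1}\cdots w_k^{s_k}) = w_k^{s_k}\cdots w_1^{s_1}$. This is the only non-formal point and the hard part: passing from $\hat{\mathtt w}(n)$ to its reversal amounts, after bubbling letters past one another, to inserting commutators $[w_i^{[s_i n]}, w_j^{[s_j n]}]$, and the claim is exactly that these grow sublinearly in $n$. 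For $k\le 2$ this is free, since conjugation-invariance makes reversal of a one- or two-letter product norm-preserving, but for $k\ge 3$ it genuinely needs the displacement hypothesis: property $\mathsf{FM}$ produces, for the relevant unions of conjugates, elements displacing them far away, and the resulting Burago--Ivanov--Polterovich-type estimate forces each correcting commutator to have uniformly bounded $\nu$. This is the same mechanism that underlies Proposition \ref{limit}, and I would isolate it as a commutator lemma (equivalently, the statement that $A_\nu$ is abelian) to be established in Section \ref{norm}; granting it, reversal-invariance, and hence the well-definedness of the $\mathbb R$-action, follows.
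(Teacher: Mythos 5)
Your argument is correct and follows essentially the same route as the paper: well-definedness of the sum is the same triangle-inequality-plus-conjugation-invariance computation, and well-definedness of the scalar action reduces, exactly as in the paper's Lemma \ref{real action}, to the homogeneity $||\mathtt{w}^{(\lambda)}||_\nu=|\lambda|\cdot||\mathtt{w}||_\nu$, whose only non-formal ingredient is the same $\mathsf{FM}$-displacement commutator bound (Lemma \ref{fundamental}) that you correctly isolate. The sole cosmetic difference is that you treat negative $\lambda$ via reversal-invariance (equivalently, commutativity of $A_\nu$, the paper's Lemma \ref{commute}) and positive $\lambda$ by direct reparametrization, whereas the paper uses Lemma \ref{distribute} together with a rational-approximation-and-continuity argument; both versions rest on the identical commutator estimate, so this is a repackaging rather than a different proof.
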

To use the Hahn-Banach theorem, we prove that $A_\nu$ is a normed vector space.
\begin{prop}\label{norm statement}
Assume that $G$ satisfies the property $\mathsf{FM}$.
Then $(A_\nu,||\cdot||_\nu)$ is a normed vector space with respect to the above operations.
\end{prop}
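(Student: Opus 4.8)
The plan is to verify, in turn, the vector-space axioms for $(A_\nu,+,\cdot)$ and the norm axioms for $||\cdot||_\nu$, taking as known the three displayed inequalities on $A_G$ (subadditivity, conjugation-invariance and inversion-invariance of $||\cdot||_\nu$), the existence of the defining limit (Proposition \ref{limit}), and the well-definedness of the operations (Proposition \ref{well-defined operations}). Most axioms are then formal. The triangle inequality $||\mathsf{g}+\mathsf{h}||_\nu\le||\mathsf{g}||_\nu+||\mathsf{h}||_\nu$ is exactly subadditivity on $A_G$; nonnegativity is immediate from $\nu\ge0$; and $||\mathsf{g}||_\nu=0\iff\mathsf{g}=0$ holds by the very definition of $\sim$, once one notes that $||\cdot||_\nu$ descends to $A_\nu$ because $\big|\,||\mathtt{g}||_\nu-||\mathtt{h}||_\nu\,\big|\le||\mathtt{g}\cdot\bar{\mathtt{h}}||_\nu$. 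Associativity of $+$ and the neutral element $[1]$ are clear from concatenation. For the additive inverse $-[\mathtt{g}]=[\bar{\mathtt{g}}]$ and for the single-letter identities $[g^{c}]+[g^{d}]=[g^{c+d}]$ one evaluates the defining limit on the relevant word: telescoping collapses the exponents to a bounded power (e.g. $g^{[sn]}g^{[-sn]}=g^{\epsilon_n}$ with $\epsilon_n\in\{-1,0\}$, so $\nu$ stays bounded and $\tfrac1n\nu\to0$), conjugation-invariance handling the multi-letter case. Homogeneity $||\lambda\mathsf{g}||_\nu=|\lambda|\,||\mathsf{g}||_\nu$ follows for $\lambda\ge0$ by the substitution $N\approx\lambda n$ in the limit, and for $\lambda<0$ by combining this with $||{-}\mathsf{x}||_\nu=||\mathsf{x}||_\nu$; the scalar distributivity laws reduce, using commutativity, to these single-letter identities.

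The only substantial point — and the one place where the property $\mathsf{FM}$ enters — is commutativity of $+$. Since every class is a finite sum of single-letter classes, $[\mathtt{g}]=\sum_i[g_i^{s_i}]$, and since $A_\nu$ is already a group, it suffices to show that the generators pairwise commute, i.e. $[p^{a}q^{b}]=[q^{b}p^{a}]$ for all $p,q\in G$ and $a,b\in\mathbb{R}$. Unwinding the definition, this amounts to $||p^{a}q^{b}p^{-a}q^{-b}||_\nu=0$, and after discarding the bounded rounding corrections from $[\cdot]$ it becomes
\[\lim_{n\to\infty}\frac1n\,\nu\big([p^{[an]},q^{[bn]}]\big)=0.\]
I would prove the stronger assertion that $\nu([p^{A},q^{B}])$ is bounded uniformly in the integers $A,B$. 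First, because $G$ is c-generated by $H$, both $p$ and $q$ lie in the subgroup generated by a fixed finite union $\mathcal{U}=x_1Hx_1^{-1}\cup\cdots\cup x_rHx_r^{-1}$ of conjugates of $H$. Property $\mathsf{FM}(2)$ provides an element of $\mathrm{D}^{f}_H(\mathcal{U})$, and the definition of $\mathrm{D}^{f}_H$ (taking the prescribed union of conjugates to be $\mathcal{U}$ itself) then yields a single $z\in G$, independent of $A,B$, with $z\mathcal{U}z^{-1}$ commuting with $\mathcal{U}$; hence $zq^{B}z^{-1}$ commutes with $p^{A}$ for all $A,B$.

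With $z$ fixed, the estimate is a short commutator computation. Writing $q^{B}=(zq^{B}z^{-1})F$ with $F=(zq^{B}z^{-1})^{-1}q^{B}=[z,q^{-B}]$, and using $[p^{A},zq^{B}z^{-1}]=1$ together with the identity $[p^{A},uv]=[p^{A},u]\,u[p^{A},v]u^{-1}$, one sees that $[p^{A},q^{B}]$ is conjugate to $[p^{A},F]$; therefore $\nu([p^{A},q^{B}])=\nu([p^{A},F])\le 2\nu(F)$. Finally, grouping $F=z\,(q^{-B}z^{-1}q^{B})$ and invoking conjugation-invariance gives $\nu(F)\le\nu(z)+\nu(z^{-1})=2\nu(z)$, so $\nu([p^{A},q^{B}])\le 4\nu(z)$ for all $A,B$; dividing by $n$ and letting $n\to\infty$ gives the displayed limit and hence commutativity. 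The main obstacle is exactly this step: one must fragment $p,q$ into the fixed displaceable configuration $\mathcal{U}$ and extract from $\mathsf{FM}$ the single commuting displacement $z$, after which the uniform boundedness of $\nu([p^{A},q^{B}])$ — the mechanism that makes the stabilized norm abelian — falls out of conjugation-invariance. Assembling these verifications shows that $(A_\nu,||\cdot||_\nu)$ is a normed vector space.
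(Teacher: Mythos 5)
Your proof is correct, and the crucial step --- commutativity of $+$ --- rests on exactly the same mechanism as the paper's: use property $\mathsf{FM}$ to produce a conjugate $z$ of the far-away displacing element whose conjugation action makes $z\langle\mathcal{U}\rangle z^{-1}$ commute with $\langle\mathcal{U}\rangle$, and deduce the uniform bound $\nu([p^A,q^B])\le 4\nu(z)$ so that the stabilization kills the commutator. This is precisely the content of the paper's Lemma \ref{fundamental} (your inline computation reproduces its proof, with $[p^A,q^B]$ conjugate to $[p^A,[z,q^{-B}]]$ playing the role of the identity $[g,f]=[g,[f,hh_0h^{-1}]]$) combined with Lemma \ref{commute}. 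Where you genuinely diverge is in the surrounding organization: you reduce every class to a sum of single-letter classes and obtain the scalar distributivity law $(\lambda_1+\lambda_2)\mathsf{g}=\lambda_1\mathsf{g}+\lambda_2\mathsf{g}$ from the elementary telescoping identity $[g^{c}]+[g^{d}]=[g^{c+d}]$ plus commutativity, whereas the paper's Lemma \ref{distribute} proves distributivity directly for arbitrary words by a graphical-calculus decomposition of $\bar{\mathtt{g}}^{(\lambda_1+\lambda_2)}\cdot\mathtt{g}^{(\lambda_1)}\cdot\mathtt{g}^{(\lambda_2)}$ into a bounded number $l(k)$ of commutators, each again controlled by Lemma \ref{fundamental}. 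Your route buys a more elementary distributivity step (no graphical calculus needed there, since commutativity is required anyway); the paper's route keeps distributivity independent of commutativity and handles general words uniformly. Likewise, the paper's Lemma \ref{commute} applies Lemma \ref{fundamental} once to the full words $f_1^{[s_1n]}\cdots f_k^{[s_kn]}$ and $g_1^{[t_1n]}\cdots g_l^{[t_ln]}$ (the latter lying in a fixed finitely generated subgroup), while you re-derive the bound letter by letter. The one place your sketch is looser than the paper is homogeneity $||\lambda\mathsf{g}||_\nu=|\lambda|\,||\mathsf{g}||_\nu$ for irrational $\lambda$: the substitution $N\approx\lambda n$ needs the Lipschitz estimate of Lemma \ref{continuity} to pass between integer and real parameters (the paper does rational $\lambda$ by a subsequence argument and then invokes continuity in $\lambda$); this is easily supplied and is not a gap.
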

We prove Proposition \ref{well-defined operations} and \ref{norm statement} in Section \ref{norm}.

Let $G$ be a group and $\nu$  a conjugation-invariant pseudo-norm on $G$.
Let $L(G,\nu)$ denote the set of Lipschitz continuous (linear) homomorphisms from $A_\nu$ to $\mathbb{R}$.
By the Hahn-Banach Theorem, Proposition \ref{norm statement} implies the following proposition.

\begin{prop}\label{HahnBanach}
Assume that $G$ satisfies the property $\mathsf{FM}$.
Then, for any element $\mathsf{g}$ of $A_\nu$,
\[||\mathsf{g}||_\nu=\sup_{\hat\phi\in L(G,\nu)}\frac{\hat\phi(\mathsf{g})}{l(\hat\phi)},\]
where $l(\hat\phi)$ is the optimal Lipschitz constant of $\hat\phi$.
\end{prop}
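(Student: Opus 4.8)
The plan is to deduce this statement from Proposition \ref{norm statement}, which tells us that $(A_\nu,\|\cdot\|_\nu)$ is a genuine normed vector space (not merely seminormed), together with the Hahn--Banach theorem. Once that structure is granted, the asserted equality is exactly the classical duality formula expressing the norm of a vector as a supremum over the continuous dual, so the real task is to match the objects carefully: $L(G,\nu)$ is by definition the space of Lipschitz linear functionals $A_\nu\to\mathbb{R}$, and for such a functional $\hat\phi$ the optimal Lipschitz constant $l(\hat\phi)$ agrees with the operator norm, since by linearity $|\hat\phi(\mathsf{x})-\hat\phi(\mathsf{y})|=|\hat\phi(\mathsf{x}-\mathsf{y})|$ and $\hat\phi(0)=0$, whence $l(\hat\phi)=\sup_{\mathsf{x}\neq0}|\hat\phi(\mathsf{x})|/\|\mathsf{x}\|_\nu$.

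First I would dispose of the inequality ``$\leq$''. For any nonzero $\hat\phi\in L(G,\nu)$, the Lipschitz estimate applied to the pair $\mathsf{g}$ and $0$ gives $\hat\phi(\mathsf{g})\leq|\hat\phi(\mathsf{g})|\leq l(\hat\phi)\,\|\mathsf{g}\|_\nu$; dividing by $l(\hat\phi)$ and taking the supremum over $\hat\phi$ yields $\sup_{\hat\phi}\hat\phi(\mathsf{g})/l(\hat\phi)\leq\|\mathsf{g}\|_\nu$. The reverse inequality is where Hahn--Banach enters. If $\|\mathsf{g}\|_\nu=0$, then $\mathsf{g}=0$ in $A_\nu$ by nondegeneracy of the norm (Proposition \ref{norm statement}), and both sides vanish; so I may assume $\|\mathsf{g}\|_\nu>0$. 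On the one-dimensional subspace $\mathbb{R}\mathsf{g}\subseteq A_\nu$ I define the linear functional $\hat\phi_0(\lambda\mathsf{g})=\lambda\|\mathsf{g}\|_\nu$. Because $|\hat\phi_0(\lambda\mathsf{g})|=|\lambda|\,\|\mathsf{g}\|_\nu=\|\lambda\mathsf{g}\|_\nu$, this $\hat\phi_0$ has operator norm exactly $1$ on $\mathbb{R}\mathsf{g}$. By the Hahn--Banach theorem I extend $\hat\phi_0$ to a linear functional $\hat\phi$ on all of $A_\nu$ with the same norm, so $l(\hat\phi)=1$; in particular $\hat\phi$ is Lipschitz, hence $\hat\phi\in L(G,\nu)$, and $\hat\phi(\mathsf{g})=\|\mathsf{g}\|_\nu$. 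Thus $\hat\phi(\mathsf{g})/l(\hat\phi)=\|\mathsf{g}\|_\nu$, which shows $\sup_{\hat\phi}\hat\phi(\mathsf{g})/l(\hat\phi)\geq\|\mathsf{g}\|_\nu$, and combining the two inequalities gives the claimed equality.

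I do not expect a genuine analytic obstacle in this argument: the substantive work has already been carried out in Proposition \ref{norm statement}, namely the proof that $A_\nu$ is a \emph{normed} (rather than merely seminormed) vector space, which in turn rests on the property $\mathsf{FM}$. The only points in the present proof that require attention are the identification of $l(\hat\phi)$ with the operator norm of a linear functional and the observation that the Hahn--Banach extension genuinely lands in $L(G,\nu)$ (being both linear and Lipschitz), and both of these are immediate from the definitions. The remaining care is purely bookkeeping in handling the degenerate case $\|\mathsf{g}\|_\nu=0$ separately.
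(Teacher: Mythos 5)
Your proof is correct and takes essentially the same route as the paper, which simply asserts that the proposition follows from Proposition \ref{norm statement} together with the Hahn--Banach theorem; you have just written out the standard one-dimensional-subspace extension argument that the paper leaves implicit. The only minor point (which the paper also glosses over) is the convention for the quotient $\hat\phi(\mathsf{g})/l(\hat\phi)$ when $\hat\phi=0$, but this does not affect the argument.
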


For an element $\hat\phi$ of $L(G,\nu)$, we define the map $\phi\colon G\to\mathbb{R}$ by $\phi(g)=\hat\phi([g^1])$ for an element $g$ of $G$.

\begin{prop}\label{hom to controlled qm}

Let $H$ be an element of $\mathsf{FM}(G)$.
For any element $\hat\phi$ of $L(G,\nu)$, $\phi$ is a homogeneous $H$-quasimorphism.
Moreover, $D(\phi)\leq8l(\hat\phi)\cdot E_{H,\nu}(H)$.
\end{prop}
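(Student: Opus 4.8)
The plan is to deduce everything from the linearity and the Lipschitz property of $\hat\phi$ together with a single stable estimate in $A_\nu$, so that all the analytic content is concentrated in one displacement computation. First I would dispose of homogeneity: since $||(g^n)^1\cdot \overline{(g^1)^{(n)}}||_\nu=\lim_{m\to\infty}\frac1m\nu(g^{nm}g^{-nm})=0$, the classes $[(g^n)^1]$ and $n[g^1]$ agree in $A_\nu$, so $\phi(g^n)=\hat\phi([(g^n)^1])=\hat\phi(n[g^1])=n\phi(g)$ by linearity of $\hat\phi$. Hence $\phi$ is homogeneous.

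Next I would reduce the $H$-quasimorphism inequality to a norm bound. By linearity, $\phi(fg)-\phi(f)-\phi(g)=\hat\phi\big([(fg)^1]-[f^1]-[g^1]\big)$, and because $\hat\phi$ is $l(\hat\phi)$-Lipschitz with respect to $||\cdot||_\nu$, it suffices to show $||[(fg)^1]-[f^1]-[g^1]||_\nu\le 8\,E_{H,\nu}(H)\,\min\{\nu_H(f),\nu_H(g)\}$. Writing $\delta(a,b)=[(ab)^1]-[a^1]-[b^1]$, I fragment $f=f_1\cdots f_k$ with $k=\nu_H(f)$ and each $f_j$ conjugate to an element of $H$, and use the cocycle identity for $\delta$ to telescope into $\delta(f,g)=\sum_{j}\delta(f_j,q_j)-\sum_{j}\delta(f_j,r_j)$ with $q_j=f_{j+1}\cdots f_k g$ and $r_j=f_{j+1}\cdots f_k$. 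This produces at most $2\nu_H(f)$ terms, each of the shape $\delta(a,w)$ with $a$ conjugate to an element of $H$ and $w\in G$ arbitrary; by conjugation-invariance of $||\cdot||_\nu$ each reduces to the case $a=h\in H$. Running the symmetric telescoping over a fragmentation of $g$ yields the bound with $\nu_H(g)$ in place of $\nu_H(f)$, which produces the $\min$.

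The heart of the argument is the single-fragment estimate $||\delta(h,w)||_\nu=\lim_{m\to\infty}\frac1m\nu\big((hw)^m h^{-m}w^{-m}\big)\le 4\,E_{H,\nu}(H)$ for $h\in H$. Here I would rewrite $(hw)^m h^{-m}w^{-m}=h_0h_1\cdots h_{m-1}h_m^{-m}$, where $h_i=w^i h w^{-i}$ lies in the conjugate $w^iHw^{-i}$ of $H$. Property $\mathsf{FM}$ guarantees $\mathrm{D}^{f}_H\big(w^0Hw^{-0}\cup\cdots\cup w^mHw^{-m}\big)\neq\emptyset$, so for each $m$ I can select a far-away displacer $\Theta_m$ with $\nu(\Theta_m)\le E_{H,\nu}(H)+\varepsilon$ whose $H$-conjugate commutes with every $h_i$. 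Replacing each $h_i$ by its displaced, mutually commuting copy costs a commutator of norm at most $2\nu(\Theta_m)$; after the commuting copies telescope away, roughly $2m$ commutators survive, so $\nu\big((hw)^m h^{-m}w^{-m}\big)\le 4m\,E_{H,\nu}(H)+o(m)$, giving the stable slope $\le 4\,E_{H,\nu}(H)$. Combining with the telescoping count yields $D(\phi)\le 8\,l(\hat\phi)\,E_{H,\nu}(H)$.

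The main obstacle is precisely this displacement step. The family $\{w^iHw^{-i}\}_{i=0}^m$ grows with $m$, so one must invoke the full strength of property $\mathsf{FM}$ (the ability to displace an arbitrary finite union of conjugates of $H$ far away), keep $\nu(\Theta_m)$ uniformly within $\varepsilon$ of $E_{H,\nu}(H)$, and arrange the commutator bookkeeping so that the number of surviving commutators grows only linearly in $m$ with the correct slope; this linear (rather than bounded or superlinear) growth is exactly what forces the stable limit to be finite and nonzero in general. By comparison, the homogeneity and the Hahn–Banach/Lipschitz reduction are routine.
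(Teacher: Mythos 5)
Your homogeneity argument and the reduction via the cocycle identity for $\delta(a,b)=[(ab)^1]-[a^1]-[b^1]$ are sound (the telescoping over a fragmentation of $f$ plays the role that Lemma \ref{EPK} plays in the paper and produces the same factor of $2$), and the conjugation reduction to a single fragment $\delta(h,w)$ with $h\in H$ is correct. The gap is in what you call the heart, the estimate $||\delta(h,w)||_\nu\le 4E_{H,\nu}(H)$. You propose to displace the entire union $K_m=w^0Hw^{0}\cup\cdots\cup w^mHw^{-m}$ at once by a single $\Theta_m$ with $\nu(\Theta_m)\le E_{H,\nu}(H)+\varepsilon$. Property $\mathsf{FM}$ only gives $\mathrm{D}^{f}_H(K_m)\neq\emptyset$, i.e. $E_{H,\nu}(K_m)<\infty$. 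Since $H\subseteq K_m$, any element of $\mathrm{D}^{f}_H(K_m)$ lies in $\mathrm{D}^{f}_H(H)$, so $E_{H,\nu}(K_m)\geq E_{H,\nu}(H)$ with no upper bound in terms of $E_{H,\nu}(H)$: in the definition of $\mathrm{E}_{H,\nu}(K)$ it is the norm of the element displacing $K$ that is measured, so enlarging $K$ can only make displacement more expensive, and a priori $E_{H,\nu}(K_m)$ may grow with $m$, destroying the stable estimate. In addition, the commutator bookkeeping (``roughly $2m$ commutators survive'') is not substantiated: the displaced copies $\Theta_m h_i\Theta_m^{-1}$ commute with the $h_j$ but not with one another, so they do not simply telescope away.

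The paper avoids both problems by never displacing more than one conjugate of $H$ at a time. It writes $g^mf^m(g^{-1}f^{-1})^m$ as a product of $m$ conjugates of the single commutators $[g,f^i]$ and bounds each factor separately by Lemma \ref{fundamental}: $\nu([g,f^i])\le 4E_{H,\nu}(K)$ with $K$ the one conjugate of $H$ containing all powers $f^i$, and $E_{H,\nu}(aHa^{-1})=E_{H,\nu}(H)$. The quantifier structure of $\mathrm{D}^{f}_H(K)$ --- a fixed $h_0$ whose conjugate $hh_0h^{-1}$ is allowed to depend on the finite family $g_1Hg_1^{-1},\ldots,g_kHg_k^{-1}$ to be avoided --- is exactly what makes each such application cost only $E_{H,\nu}(H)$. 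Replacing your simultaneous-displacement step by this fragment-by-fragment decomposition (applied in your notation to $[w,h^i]$ with $h^i\in H$) repairs the proof and recovers the constant $8=2\times 4$; the rest of your argument can then stand as written.
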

To prove Proposition \ref{hom to controlled qm}, we use the following lemmas.
\begin{lem}\label{fundamental}
Let $G$ be a group and $H,K$  subgroups of $G$.
Assume that $(G,H)$ satisfies the property $\mathsf{FM}$.
Then for any element $g$ of $G$ and any element $f$ of $K$, $\nu([g,f])\leq4E_{H,\nu}(K)$.
\end{lem}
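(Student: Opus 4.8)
The plan is to reduce the statement to the single inequality $\nu([g,f])\le 4\nu(\phi)$ for an arbitrary $\phi\in\mathrm{D}^{f}_H(K)$; taking the infimum over all such $\phi$ then yields $\nu([g,f])\le 4E_{H,\nu}(K)$ (and if $\mathrm{D}^{f}_H(K)=\emptyset$ the right-hand side is $+\infty$, so there is nothing to prove). Fix therefore $g\in G$, $f\in K$, and $\phi\in\mathrm{D}^{f}_H(K)$.

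First I would exploit condition (1) of $\mathsf{FM}$: since $G$ is c-generated by $H$, the element $g$ admits a finite decomposition $g=g_1h_1g_1^{-1}\cdots g_kh_kg_k^{-1}$ with $h_1,\dots,h_k\in H$ and $g_1,\dots,g_k\in G$. Feeding exactly these $g_1,\dots,g_k$ into the definition of $\mathrm{D}^{f}_H(K)$ produces an element $h\in G$ such that, writing $\tilde\phi=h\phi h^{-1}$, the set $\tilde\phi K\tilde\phi^{-1}$ commutes with $g_1Hg_1^{-1}\cup\cdots\cup g_kHg_k^{-1}$. In particular $\tilde\phi f\tilde\phi^{-1}$ commutes with each factor $g_ih_ig_i^{-1}$, hence with their product $g$. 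Thus, putting $f'=\tilde\phi f\tilde\phi^{-1}$, we obtain the crucial commuting relation $gf'=f'g$.

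The final step is a commutator manipulation. Using $f=[f,\tilde\phi]\,f'$ together with $gf'=f'g$, one verifies the algebraic identity
\[
[g,f]=g\,[f,\tilde\phi]\,g^{-1}\cdot[f,\tilde\phi]^{-1}.
\]
The axioms of a conjugation-invariant pseudo-norm then finish the job: the triangle inequality, conjugation-invariance (for the $g$-conjugate and for passing from $\tilde\phi$ back to $\phi$), and $\nu(x^{-1})=\nu(x)$ give
\[
\nu([g,f])\le 2\,\nu([f,\tilde\phi])\le 2\bigl(\nu(f\tilde\phi f^{-1})+\nu(\tilde\phi^{-1})\bigr)=4\,\nu(\tilde\phi)=4\,\nu(\phi),
\]
and taking the infimum over $\phi\in\mathrm{D}^{f}_H(K)$ completes the argument.

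I expect the main obstacle to be the bookkeeping in the middle step rather than the final estimate: one must be careful that the \emph{same} conjugating elements $g_1,\dots,g_k$ appearing in the fragmentation of $g$ are the ones supplied to the definition of $\mathrm{D}^{f}_H(K)$, and that the hypothesis ``$\tilde\phi K\tilde\phi^{-1}$ commutes with $\bigcup_i g_iHg_i^{-1}$'' genuinely upgrades to ``$f'$ commutes with the product $g$''. This last implication relies on $\tilde\phi f\tilde\phi^{-1}\in\tilde\phi K\tilde\phi^{-1}$ and $g_ih_ig_i^{-1}\in g_iHg_i^{-1}$, together with the elementary fact that commuting with each factor of a product forces commuting with the product. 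Once this is in place, the commutator identity and the norm inequalities are entirely routine.
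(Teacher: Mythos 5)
Your proof is correct and follows essentially the same route as the paper: use c-generation to produce the conjugates $g_iHg_i^{-1}$, feed them into the definition of $\mathrm{D}^{f}_H(K)$ to get a conjugate $\tilde\phi$ of $\phi$ so that $\tilde\phi f\tilde\phi^{-1}$ commutes with $g$, deduce $[g,f]=[g,[f,\tilde\phi]]$, and bound $\nu$ by $4\nu(\phi)$ via conjugation-invariance and the triangle inequality. The only cosmetic difference is that the paper chooses the $h_i$ so that both $f$ and $g$ lie in $\langle h_1Hh_1^{-1},\ldots,h_kHh_k^{-1}\rangle$, whereas you only need (and only use) the decomposition of $g$; both suffice.
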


\begin{proof}
Let $f,g$ and $h_0$ be elements of $K,G$ and $\mathrm{D}^{f}_H(K)$, respectively.
Since $G$ is c-generated by $H$ and the set $\{f,g\}$ is a finite set, there exist elements $h_1,\ldots,h_k$ of $G$ such that $f,g\in\langle h_1Hh_1^{-1}\ldots,h_kHh_k^{-1}\rangle$.

Then, by the definition of $\mathrm{D}^{f}_H(K)$, there exists an element $h$ of $G$ such that $(hh_0h^{-1})K(hh_0h^{-1})^{-1}$ commutes with $\langle h_1Hh_1^{-1}\ldots,h_kHh_k^{-1}\rangle$.
Since $f\in K$ and $f,g\in\langle h_1Hh_1^{-1}\ldots,h_kHh_k^{-1}\rangle$, $(hh_0h^{-1})f(hh_0h^{-1})^{-1}$ commutes with both of $f$ and $g$ and thus $[g,f]=[g,[f,hh_0h^{-1}]]$ holds.

Since $\nu$ is a conjugation-invariant pseudo-norm,
\begin{align*}
\nu([g,f])
& \leq \nu(g[f,hh_0h^{-1}]g^{-1})+\nu([f,hh_0h^{-1}]^{-1})\\
&=2\nu([f,hh_0h^{-1}])\\
      & \leq 2(\nu(f(hh_0h^{-1})f^{-1})+\nu((hh_0h^{-1})^{-1}))\\
      &=4\nu(hh_0h^{-1})=4\nu(h_0).
\end{align*}
By taking infimum,  $\nu([g,f])\leq4E_{H,\nu}(K)$.
\end{proof}

\begin{lem}[\cite{EP},\cite{Ki}]\label{EPK}
Let $G$ be a group, $H$ a subgroup of $G$ and $C$ a positive real number.
Assume that a map $\phi\colon G\to\mathbb{R}$ satisfies $|\phi(f)+\phi(g)-\phi(fg)|\leq C$ for any elements $f,g$ of $G$ with $\nu_H(f)=1$.
Then $\phi$ is an $H$-quasimorphism.
Moreover, $D(\phi)\leq 2C$.
\end{lem}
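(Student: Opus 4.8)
The plan is to upgrade the hypothesis---which controls $|\phi(f)+\phi(g)-\phi(fg)|$ only when the \emph{first} factor has unit fragmentation norm---to the full defect estimate of Definition \ref{qm rt nu}, by decomposing the factor of smaller fragmentation norm into unit-norm pieces and telescoping. Fix $f,g\in G$ and put $n=\min\{\nu_H(f),\nu_H(g)\}$; I first treat the case $n=\nu_H(f)$. By Definition \ref{fragmentation norm} I may write $f=f_1\cdots f_n$ with each $f_i$ a single conjugate of an element of $H$, so that $\nu_H(f_i)=1$ after discarding any trivial factors. The point is that the hypothesis now applies at every stage with the current leading letter as its first argument.

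First I would telescope $\phi(fg)=\phi(f_1\cdots f_n g)$ by peeling off $f_1,\dots,f_n$ successively; since $\nu_H(f_i)=1$, each peeling costs at most $C$, so that $|\phi(fg)-\sum_{i=1}^{n}\phi(f_i)-\phi(g)|\le nC$. Running the same peeling on $\phi(f)=\phi(f_1\cdots f_n)$ uses only $n-1$ steps, the final factor being reached directly, giving $|\phi(f)-\sum_{i=1}^{n}\phi(f_i)|\le (n-1)C$. Subtracting, the sums $\sum_i\phi(f_i)$ cancel and the triangle inequality yields
\[|\phi(fg)-\phi(f)-\phi(g)|\le(2n-1)C<2C\,\nu_H(f).\]
Careful bookkeeping of these two step-counts ($n$ and $n-1$) is exactly what produces the sharp constant $2C$ rather than a larger one.

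It remains to replace $\nu_H(f)$ by $\min\{\nu_H(f),\nu_H(g)\}$, i.e. to obtain the companion bound $|\phi(fg)-\phi(f)-\phi(g)|<2C\,\nu_H(g)$ in the case $n=\nu_H(g)$, and this is where the genuine obstacle lies: the hypothesis is one-sided, so peeling is naturally available only on the left and there is no direct way to strip a unit-norm letter off the right of an arbitrary word. The way around it is to apply the left-telescoping estimate just proved to the inverse word $g^{-1}f^{-1}=(fg)^{-1}$, using $\nu_H(g^{-1})=\nu_H(g)$ (immediate from Definition \ref{fragmentation norm}, since $H$ is a subgroup), and then to translate back through the behaviour of $\phi$ under inversion---which is precisely why in the application (Proposition \ref{hom to controlled qm}) the lemma is used for the homogeneous, hence odd, functional coming from $A_\nu$. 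Taking the smaller of the two estimates gives $|\phi(fg)-\phi(f)-\phi(g)|<2C\min\{\nu_H(f),\nu_H(g)\}$ for all $f,g$, so $\phi$ is an $H$-quasimorphism with $D(\phi)\le 2C$.
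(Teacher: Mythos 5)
The paper offers no proof of Lemma \ref{EPK} at all---it is imported from \cite{EP} and \cite{Ki}---so there is nothing in the text to compare your argument against; I can only judge the proposal on its own terms. Your first half is correct and is the standard telescoping argument: writing $f=f_1\cdots f_n$ with $\nu_H(f_i)=1$ and peeling from the left gives $|\phi(fg)-\sum_i\phi(f_i)-\phi(g)|\le nC$ and $|\phi(f)-\sum_i\phi(f_i)|\le (n-1)C$, hence $|\phi(fg)-\phi(f)-\phi(g)|\le (2\nu_H(f)-1)C$.

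The gap is in the second half. To turn the bound in $\nu_H(f)$ into a bound in $\nu_H(g)$ you pass to $(fg)^{-1}=g^{-1}f^{-1}$ and then ``translate back through the behaviour of $\phi$ under inversion.'' That step uses $\phi(x^{-1})=-\phi(x)$, which is not among the hypotheses: the lemma is stated for an arbitrary map $\phi$ satisfying the one-sided defect bound. Nor can oddness be recovered from the hypothesis with a \emph{uniform} error: the hypothesis only yields $|\phi(x)+\phi(x^{-1})|=O(\nu_H(x)\cdot C)$ (telescope $x=x_1\cdots x_N$ and use $|\phi(x_i)+\phi(x_i^{-1})|\le 2C$ termwise), and an error of order $\nu_H(f)\cdot C$ is exactly what the $\min$ is supposed to eliminate. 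The other natural route, replacing $\phi(fg)$ by $\phi(gf)$ and peeling $g$ from the left, fails for the same reason: approximate conjugation-invariance extracted from the one-sided hypothesis also degrades linearly in the fragmentation norm of the conjugator. So as written your argument establishes the lemma only under the extra assumption that $\phi$ is odd (or uniformly approximately odd, or conjugation-invariant); observing that the $\phi$ of Proposition \ref{hom to controlled qm} happens to be odd justifies the \emph{application} but does not prove the lemma as stated. You should either add the needed symmetry as an explicit hypothesis (it does hold in the application, since $[(g^{-1})^1]=-[g^1]$ in $A_\nu$ and hence $\phi(g^{-1})=\hat\phi(-[g^1])=-\phi(g)$), or produce an argument for the $\nu_H(g)$-side that uses only the stated one-sided condition---I do not see one, and I suspect the cited sources carry precisely such an additional hypothesis.
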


\begin{proof}[Proof of Proposition \ref{hom to controlled qm}]
Let $\hat\phi$ be an element of $L(G,\nu)$ and $f,g$ elements of $G$ with $\nu_H(f)=1$.
Since $H$ is a subgroup, $\nu_H(f^i)=1$ for any non-zero integer $i$.
Since $\nu$ is a conjugation-invariant pseudo-norm, by Lemma \ref{fundamental},
\begin{align*}
&|\phi(g)+\phi(f)-\phi(fg)|\\
&=|\hat\phi([g^1])+\hat\phi([f^1])-\hat\phi([(fg)^1])|\\
&=|\hat\phi([g^1]+[f^1]+(-1)[(fg)^1])|\\
&\leq l(\hat\phi)\cdot \lim_mm^{-1}\cdot\nu(g^mf^m(g^{-1}f^{-1})^m)\\
&= l(\hat\phi)\cdot \lim_mm^{-1}\cdot\nu((g^{m-1}[g,f^m]g^{-m+1})(g^{m-2}[g,f^{m-1}]g^{-m+2})\cdots(g^0[g,f]g^0))\\
&\leq  l(\hat\phi)\cdot \liminf_mm^{-1}\cdot\sum_{i=1}^{m-1}\nu([g,f^i])\\
&\leq  l(\hat\phi)\cdot \liminf_mm^{-1}\cdot (m-1)\cdot 4E_{H,\nu}(H)\\
&=4l(\hat\phi)\cdot E_{H,\nu}(H).\\
\end{align*}
Thus, by Lemma \ref{EPK}, $\phi$ is an $H$-quasimorphism and $D(\phi)\leq 8l(\hat\phi)\cdot E_{H,\nu}(H)$.
Since $\hat{\phi}\colon A_\nu\to\mathbb{R}$ is a homomorphism, $\phi\colon G\to\mathbb{R}$ is a homogeneous $H$-quasimorphism.
\end{proof}
\begin{proof}[Proof of Theorem \ref{main thm}]
Note that $||[g^1]||_\nu=s\nu(g)$ for any element $g$ of $G$.
Then (1) follows from Proposition \ref{HahnBanach} and \ref{hom to controlled qm}.
To prove (2), it is sufficient to prove it for an element $g$ of $[G,G]$ with $s\nu(g)>0$.
Then, by Proposition \ref{HahnBanach} and $||[g^1]||_\nu=s\nu(g)$, there exists an element $\hat\phi$ of $L(G,\nu)$ satisfying $\phi(g)=\hat\phi([g^1])\neq 0$.
Since $g\in[G,G]$, $D(\phi)>0$.
Thus Proposition \ref{hom to controlled qm} implies $8l(\hat\phi)^{-1}\leq D(\phi)^{-1}\cdot E_{H,\nu}(H)$.
Therefore Proposition \ref{HahnBanach} implies
\[s\nu(g)\leq 8\sup_\phi\frac{\phi(g)\cdot E_{H,\nu}(H)}{D(\phi)}.\]
\end{proof}


\section{Proof of being a normed vector space}\label{norm}
In the present section, we use the following notation.
\begin{defi}
Let $H$ be a subgroup of a group $G$ and $\nu$ a conjugation-invariant pseudo-norm on $G$.
For elements $g_1,\ldots,g_k$ of $G$, we define the far away displacement energy $E_{H,\nu}[g_1,\ldots,g_k]$ of $(g_1,\ldots,g_k)$ by  
\[E_{H,\nu}[g_1,\ldots,g_k]=\inf E_{H,\nu}( \langle h_1Hh_1^{-1},\ldots,h_lHh_l^{-1}\rangle),\]
where the infimum is taken over $h_1,\ldots,h_l$ such that $g_1,\ldots,g_k\in\langle h_1Hh_1^{-1},\ldots,h_lHh_l^{-1}\rangle$.
If $(G,H)$ satisfies the property $\mathsf{FM}$, $E_{H,\nu}[g_1,\ldots,g_k]<\infty$ for any elements $g_1,\ldots,g_k$ of $G$.
\end{defi}
To prove Proposition \ref{limit}, \ref{well-defined operations} and \ref{norm statement}, we use the following lemma.
\begin{lem}[\cite{CZ}]\label{continuity}
Let $G$ be a group and $\nu$ a conjugation-invariant pseudo-norm on $G$.
For any elements $g_1,\ldots,g_k$ of $G$ and integers $s_1,\ldots,s_k,t_1,\ldots,t_k$,
\[\nu((g_1^{s_1}\cdots g_k^{s_k})^{-1}(g_1^{t_1}\cdots g_k^{t_k}))\leq\sum_{i=1}^k|t_i-s_i|\cdot\nu(g_i).\]
\end{lem}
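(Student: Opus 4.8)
The plan is to prove the inequality by a telescoping argument that changes the exponents one index at a time, arranged so that each elementary step differs from the previous word by conjugation of a single power $g_i^{t_i-s_i}$; conjugation invariance then removes the conjugating factor and subadditivity finishes the estimate.

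Concretely, I would introduce the intermediate words
\[w_j = g_1^{t_1}\cdots g_j^{t_j}\, g_{j+1}^{s_{j+1}}\cdots g_k^{s_k}\]
for $j=0,1,\ldots,k$, so that $w_0 = g_1^{s_1}\cdots g_k^{s_k}$ and $w_k = g_1^{t_1}\cdots g_k^{t_k}$. Writing the target element as the telescoping product
\[(g_1^{s_1}\cdots g_k^{s_k})^{-1}(g_1^{t_1}\cdots g_k^{t_k}) = w_0^{-1}w_k = \prod_{j=1}^k w_{j-1}^{-1}w_j,\]
axiom (3) (subadditivity) immediately gives $\nu(w_0^{-1}w_k)\leq\sum_{j=1}^k \nu(w_{j-1}^{-1}w_j)$.

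The main point is then to estimate each factor. The words $w_{j-1}$ and $w_j$ share the common prefix $P=g_1^{t_1}\cdots g_{j-1}^{t_{j-1}}$ and the common suffix $S=g_{j+1}^{s_{j+1}}\cdots g_k^{s_k}$, and differ only in the $j$-th factor ($g_j^{s_j}$ versus $g_j^{t_j}$), so a direct cancellation of $P^{-1}P$ yields $w_{j-1}^{-1}w_j = S^{-1}g_j^{t_j-s_j}S$. Conjugation invariance (axiom (4)) then gives $\nu(w_{j-1}^{-1}w_j)=\nu(g_j^{t_j-s_j})$, and combining subadditivity with $\nu(g_j^{-1})=\nu(g_j)$ (axiom (2)) to cover both signs of the exponent bounds this by $|t_j-s_j|\cdot\nu(g_j)$. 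Summing over $j$ gives the claimed inequality.

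There is essentially no serious obstacle here; the only thing that needs care is the bookkeeping that makes consecutive intermediate words differ by exactly one conjugated power, which is precisely what makes conjugation invariance applicable. Once the intermediate words $w_j$ are chosen in this way, the rest is a routine application of the pseudo-norm axioms.
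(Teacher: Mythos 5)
Your proof is correct and follows essentially the same route as the paper: both decompose $(g_1^{s_1}\cdots g_k^{s_k})^{-1}(g_1^{t_1}\cdots g_k^{t_k})$ into a product of conjugates of the powers $g_i^{t_i-s_i}$ and then apply subadditivity and conjugation invariance. The only difference is that your telescoping through the words $w_j$ constructs this decomposition explicitly, whereas the paper simply cites a ``graphical calculus argument'' for its existence.
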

\begin{proof}
By using a graphical calculus argument (for example, see 2.2.4 of \cite{C}), there exist elements $h_1,\ldots,h_k$ of $\langle g_1,\cdots,g_k\rangle$ such that
\[(g_1^{s_1}\cdots g_k^{s_k})^{-1}(g_1^{t_1}\cdots g_k^{t_k})=h_k^{-1}g_k^{t_k-s_k}h_k\cdots h_1^{-1}g_1^{t_1-s_1}h_1.\]
Since $\nu$ is a conjugation-invariant pseudo-norm,
\[\nu((g_1^{s_1}\cdots g_k^{s_k})^{-1}(g_1^{t_1}\cdots g_k^{t_k}))\leq\sum_{i=1}^k\nu(h_i^{-1}g_i^{t_i-s_i}h_i)\leq\sum_{i=1}^k|t_i-s_i|\cdot\nu(g_i).\]
\end{proof}
\begin{proof}[Proof of Proposition \ref{limit}]
Fix an element $\mathsf{g}=[g_1^{s_1}\cdots g_k^{s_k}]$ of $A_\nu$.
Define a function $F\colon\mathbb{Z}_{>0}\to\mathbb{R}$ by $F(m)=\nu(g_1^{[s_1m]}\ldots g_k^{[s_km]})$.
By Fekete's Lemma, it is sufficient to prove that there exists a positive real number $C$ such that $F(m+n)\leq F(m)+F(n)+C$ for any positive integer $m,n$.
By Lemma \ref{continuity},
\begin{align*}
F(m+n)
&=\nu(g_1^{[s_1(m+n)]}\cdots g_k^{[s_k(m+n)]})\\
&\leq\nu(g_1^{[s_1m]+[s_1n]}\cdots g_k^{[s_km]+[s_kn]})+\nu((g_1^{[s_1m]+[s_1n]}\cdots g_k^{[s_km]+[s_kn]})^{-1}(g_1^{[s_1(m+n)]}\cdots g_k^{[s_k(m+n)]}))\\
&\leq\nu(g_1^{[s_1m]+[s_1n]}\cdots g_k^{[s_km]+[s_kn]})+\sum_{i=1}^k\nu(g_i).\\
\end{align*}
By using a graphical calculus argument, there exists an integer $l(k)$ which depends only on $k$ and elements $f_1,\ldots,f_{l(k)}$, $f_1^\prime,\ldots,f_{l(k)}^\prime$ of $\langle g_1,\ldots,g_k\rangle$ such that
\[(g_1^{[s_1m]}\cdots g_k^{[s_km]})^{-1}(g_1^{[s_1n]}\cdots g_k^{[s_kn]})^{-1}(g_1^{[s_1m]+[s_1n]}\cdots g_k^{[s_km]+[s_kn]})=[f_1,f_1^\prime]\cdots[f_{l(k)},f_{l(k)}^\prime].\]
Fix an element $H$ of $\mathsf{FM}(G)$.
Then $E_{H,\nu}[g_1,\ldots,g_k]<\infty$.
Thus, by Lemma \ref{fundamental},
\begin{align*}
&F(m+n)-F(m)-F(n)\\
&\leq\nu(g_1^{[s_1m]+[s_1n]}\cdots g_k^{[s_km]+[s_kn]})+\sum_{i=1}^k\nu(g_i)-\nu(g_1^{[s_1m]}\cdots g_k^{[s_km]})-\nu(g_1^{[s_1n]}\cdots g_k^{[s_kn]})\\
&\leq\nu((g_1^{[s_1m]}\cdots g_k^{[s_km]})^{-1}(g_1^{[s_1n]}\cdots g_k^{[s_kn]})^{-1}(g_1^{[s_1m]+[s_1n]}\cdots g_k^{[s_km]+[s_kn]}))+\sum_{i=1}^k\nu(g_i)\\
&\leq\nu([f_1,f_1^\prime]\cdots[f_{l(k)},f_{l(k)}^\prime])+\sum_{i=1}^k\nu(g_i)\\
&\leq\sum_{j=1}^{l(k)}\nu([f_j,f_j^\prime])+\sum_{i=1}^k\nu(g_i)\\
&\leq4l(k)E_{H,\nu}[g_1,\ldots,g_k]+\sum_{i=1}^k\nu(g_i).\\
\end{align*}
Thus we can apply Fekete's Lemma.
\end{proof}

To prove Proposition \ref{well-defined operations} and \ref{norm statement}, we use the following lemmas.
\begin{lem}\label{distribute}
Let $G$ be a group satisfying the property $\mathsf{FM}$ and $\nu$ any conjugation-invariant pseudo-norm on $G$.
Then for any element $\mathtt{g}$ of $A_G$ and any real numbers $\lambda_1,\lambda_2$, 
\[||\bar{\mathtt{g}}^{(\lambda_1+\lambda_2)}\cdot\mathtt{g}^{(\lambda_1)}\cdot\mathtt{g}^{(\lambda_2)}||_\nu=0.\]

\end{lem}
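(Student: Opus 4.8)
The plan is to reduce the whole statement to a single uniform-in-$n$ estimate. Write $\mathtt{g}=g_1^{s_1}\cdots g_k^{s_k}$, and for each positive integer $n$ let $W_n\in G$ be the element obtained from $\bar{\mathtt{g}}^{(\lambda_1+\lambda_2)}\cdot\mathtt{g}^{(\lambda_1)}\cdot\mathtt{g}^{(\lambda_2)}$ by the integer-part substitution defining $\|\cdot\|_\nu$, namely
\[
W_n=g_k^{[-(\lambda_1+\lambda_2)s_kn]}\cdots g_1^{[-(\lambda_1+\lambda_2)s_1n]}\cdot g_1^{[\lambda_1 s_1 n]}\cdots g_k^{[\lambda_1 s_k n]}\cdot g_1^{[\lambda_2 s_1 n]}\cdots g_k^{[\lambda_2 s_k n]},
\]
so that $\|\bar{\mathtt{g}}^{(\lambda_1+\lambda_2)}\cdot\mathtt{g}^{(\lambda_1)}\cdot\mathtt{g}^{(\lambda_2)}\|_\nu=\lim_{n\to\infty}\tfrac1n\nu(W_n)$. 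Since $\nu\geq 0$, it suffices to prove that $\nu(W_n)$ is bounded by a constant independent of $n$; then a squeeze gives the limit $0$.

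The key is to compare $W_n$ with the collapsed word $R_n=g_1^{e_1}\cdots g_k^{e_k}$, where $e_i=[-(\lambda_1+\lambda_2)s_in]+[\lambda_1 s_in]+[\lambda_2 s_in]$ is the total exponent of $g_i$ occurring in $W_n$. By construction $W_n R_n^{-1}$ has trivial total exponent on every generator of $\langle g_1,\dots,g_k\rangle$, so, following exactly the graphical calculus argument used in the proof of Proposition \ref{limit} (see 2.2.4 of \cite{C}), I would write $W_nR_n^{-1}=[f_1,f_1']\cdots[f_l,f_l']$ with $f_j,f_j'\in\langle g_1,\dots,g_k\rangle$ and with the number $l$ of commutators depending only on the number of syllables, hence only on $k$. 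Exactly as in Proposition \ref{limit}, property $\mathsf{FM}$ ensures $E_{H,\nu}[g_1,\dots,g_k]<\infty$ for any $H\in\mathsf{FM}(G)$, and since each $f_j'$ lies in every subgroup covering $g_1,\dots,g_k$, Lemma \ref{fundamental} yields $\nu([f_j,f_j'])\leq 4E_{H,\nu}[g_1,\dots,g_k]$, whence $\nu(W_nR_n^{-1})\leq 4l\,E_{H,\nu}[g_1,\dots,g_k]$. For the collapsed word itself, I observe that the three real exponents $-(\lambda_1+\lambda_2)s_in$, $\lambda_1 s_in$, $\lambda_2 s_in$ sum to zero, so $e_i\in\{-2,-1,0\}$ and $|e_i|\leq 2$; comparing $R_n$ with the trivial word via Lemma \ref{continuity} gives $\nu(R_n)\leq\sum_{i=1}^k|e_i|\nu(g_i)\leq 2\sum_{i=1}^k\nu(g_i)$.

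Combining these by the triangle inequality,
\[
\nu(W_n)\leq\nu(W_nR_n^{-1})+\nu(R_n)\leq 4l\,E_{H,\nu}[g_1,\dots,g_k]+2\sum_{i=1}^k\nu(g_i),
\]
which is independent of $n$; dividing by $n$ and letting $n\to\infty$ finishes the proof. The step I expect to be the crux is the graphical-calculus rewriting of $W_nR_n^{-1}$: one must guarantee that sorting the three monotone blocks of $W_n$ into the single monotone word $R_n$ costs a number of commutators that is uniform in $n$, even though the individual exponents grow linearly in $n$. This is precisely the phenomenon that makes Proposition \ref{limit} work — arbitrarily large powers are absorbed into the $f_j$, so the count depends only on the syllable structure — and I would justify it by the same argument. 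The floor-function discrepancies only ever contribute the bounded corrections already recorded in the estimates above, so they cause no difficulty.
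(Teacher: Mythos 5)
Your proof is correct and follows essentially the same route as the paper's: both reduce the claim to a bound on $\nu$ of the $n$-th word that is uniform in $n$, obtained by writing the exponent-balanced part as a product of $l(k)$ commutators with entries in $\langle g_1,\ldots,g_k\rangle$ (graphical calculus), applying Lemma \ref{fundamental} together with the finiteness of $E_{H,\nu}[g_1,\ldots,g_k]$ guaranteed by property $\mathsf{FM}$, and absorbing the floor-function discrepancies via Lemma \ref{continuity}. The only difference is bookkeeping: you collapse to the word $R_n$ with total exponents $e_i\in\{-2,-1,0\}$, while the paper first replaces $[n\lambda_1 s_i+n\lambda_2 s_i]$ by $[n\lambda_1 s_i]+[n\lambda_2 s_i]$ so that the remaining word is exactly balanced.
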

\begin{proof}
Assume that $\mathtt{g}$ are represented by $g_1^{s_1}g_2^{s_2}\cdots g_k^{s_k}\in A_G$.
For any integer $n$, by using a graphical calculus argument, there exist elements $f_{n,1},\ldots,f_{n,l(k)},f_{n,1}^\prime,\ldots,f_{n,l(k)}^\prime$ of $\langle g_1,\ldots,g_k\rangle$ such that
\begin{align*}
&(g_1^{[n\lambda_1s_1]+[n\lambda_2s_1]}g_2^{[n\lambda_1s_2]+[n\lambda_2s_2]}\cdots g_k^{[n\lambda_1s_k]+[n\lambda_2s_k]})^{-1}(g_1^{[n\lambda_1s_1]}g_2^{[n\lambda_1s_2]}\cdots g_k^{[n\lambda_1s_k]})(g_1^{[n\lambda_2s_1]}g_2^{[n\lambda_2s_2]}\cdots g_k^{[n\lambda_2s_k]})\\
&=[f_{n,1},f_{n,1}^\prime]\cdots[f_{n,l(k)},f_{n,l(k)}^\prime].\\
\end{align*}
Fix an element $H$ of $\mathsf{FM}(G)$.
Then $E_{H,\nu}[g_1,\ldots,g_k]<\infty$.
Thus, by  Lemma \ref{continuity} and Lemma \ref{fundamental},
\begin{align*}
&||\bar{\mathtt{g}}^{(\lambda_1+\lambda_2)}\cdot\mathtt{g}^{(\lambda_1)}\cdot\mathtt{g}^{(\lambda_2)}||_\nu\\
&=\lim_{n\to\infty}\frac{1}{n}\cdot\nu((g_1^{[n\lambda_1s_1+n\lambda_2s_1]}\cdots g_k^{[n\lambda_1s_k+n\lambda_2s_k]})^{-1}(g_1^{[n\lambda_1s_1]}\cdots g_k^{[n\lambda_1s_k]})(g_1^{[n\lambda_2s_1]}\cdots g_k^{[n\lambda_2s_k]}))\\
&\leq\lim_{n\to\infty}\frac{1}{n}\cdot(\nu((g_1^{[n\lambda_1s_1]+[n\lambda_2s_1]}\cdots g_k^{[n\lambda_1s_k]+[n\lambda_2s_k]})^{-1}(g_1^{[n\lambda_1s_1]}\cdots g_k^{[n\lambda_1s_k]})(g_1^{[n\lambda_2s_1]}\cdots g_k^{[n\lambda_2s_k]}))+\sum_{i=1}^k\nu(g_i))\\
&=\lim_{n\to\infty}\frac{1}{n}\cdot(\nu([f_{n,1},f_{n,1}^\prime]\cdots[f_{n,l(k)},f_{n,l(k)}^\prime])+\sum_{i=1}^k\nu(g_i))\\
&\leq\lim_{n\to\infty}\frac{1}{n}\cdot(\sum_{j=1}^{l(k)}\nu([f_{n,j},f_{n,j}^\prime])+\sum_{i=1}^k\nu(g_i))\\
&\leq\lim_{n\to\infty}\frac{1}{n}\cdot(4l(k)E_{H,\nu}[g_1,\ldots,g_k]+\sum_{i=1}^k\nu(g_i))\\
&=0.\\
\end{align*}
\end{proof}

\begin{lem}\label{real action}
Let $G$ be a group satisfying the property $\mathsf{FM}$ and $\nu$  a conjugation-invariant pseudo-norm on $G$.
For any elements $g_1,\ldots,g_k$ of $G$ and real numbers $\lambda,s_1,\ldots,s_k$,
\[\lim_{n\to\infty}\frac{1}{n}\cdot\nu(g_1^{[\lambda s_1n]}\cdots g_k^{[\lambda s_kn]})=|\lambda|\lim_{n\to\infty}\frac{1}{n}\cdot\nu(g_1^{[s_1n]}\cdots g_k^{[s_kn]}).\]
\end{lem}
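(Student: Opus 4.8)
The plan is to reduce everything to the already–known existence of the stable limits (Proposition \ref{limit}) together with the continuity estimate of Lemma \ref{continuity} and the uniform commutator bound of Lemma \ref{fundamental}. Fix an element $H$ of $\mathsf{FM}(G)$ and abbreviate
\[
I(\lambda)=\lim_{n\to\infty}\tfrac{1}{n}\,\nu(g_1^{[\lambda s_1 n]}\cdots g_k^{[\lambda s_k n]});
\]
by Proposition \ref{limit} each such limit exists, so it suffices to prove $I(\lambda)=|\lambda|\,I(1)$. The case $\lambda=0$ is trivial, since then every exponent vanishes and $\nu(1)=0$.

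For the positive case $\lambda>0$, I would compare the word $g_1^{[\lambda s_1 n]}\cdots g_k^{[\lambda s_k n]}$ with $g_1^{[s_1 [\lambda n]]}\cdots g_k^{[s_k[\lambda n]]}$. Since $|\lambda s_i n-s_i[\lambda n]|=|s_i|\,|\lambda n-[\lambda n]|<|s_i|$, the two exponents differ by at most $|s_i|+1$, so Lemma \ref{continuity} bounds the difference of the corresponding $\nu$-values by the constant $\sum_i(|s_i|+1)\nu(g_i)$. Writing $\tfrac1n=\tfrac{[\lambda n]}{n}\cdot\tfrac1{[\lambda n]}$ and using $[\lambda n]/n\to\lambda$ together with $\tfrac1{[\lambda n]}\nu(g_1^{[s_1[\lambda n]]}\cdots g_k^{[s_k[\lambda n]]})\to I(1)$ yields $I(\lambda)=\lambda I(1)$. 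This argument never used the sign of the $s_i$, so applied to the negated exponents it also gives, for every $\mu>0$, the identity $\lim_n\tfrac1n\nu(g_1^{[-\mu s_1 n]}\cdots g_k^{[-\mu s_k n]})=\mu\,I(-1)$; this reduces the negative case to the single equality $I(-1)=I(1)$.

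The reversal step $I(-1)=I(1)$ is where the property $\mathsf{FM}$ enters and is the main point. Put $a_i=[s_i n]$, $P_n=g_1^{-a_1}\cdots g_k^{-a_k}$ and $Q_n=g_k^{-a_k}\cdots g_1^{-a_1}$; replacing $[-s_i n]$ by $-a_i$ costs only $\sum_i\nu(g_i)$ by Lemma \ref{continuity}, so $I(-1)=\lim_n\tfrac1n\nu(P_n)$, while $Q_n$ is precisely the word representing $\bar{\mathtt{g}}$, whence $\lim_n\tfrac1n\nu(Q_n)=||\bar{\mathtt{g}}||_\nu=||\mathtt{g}||_\nu=I(1)$. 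Now $Q_n^{-1}P_n=(g_1^{a_1}\cdots g_k^{a_k})(g_1^{-a_1}\cdots g_k^{-a_k})$ has trivial image in the abelianization of $\langle g_1,\ldots,g_k\rangle$, so by the same graphical calculus argument used in Proposition \ref{limit} there are an integer $l(k)$ depending only on $k$ and elements $f_1,\ldots,f_{l(k)},f_1',\ldots,f_{l(k)}'$ of $\langle g_1,\ldots,g_k\rangle$ with $Q_n^{-1}P_n=[f_1,f_1']\cdots[f_{l(k)},f_{l(k)}']$, the number of factors being independent of $n$. By subadditivity and Lemma \ref{fundamental} (with $E_{H,\nu}[g_1,\ldots,g_k]<\infty$ by $\mathsf{FM}$), we obtain $\nu(Q_n^{-1}P_n)\le 4l(k)E_{H,\nu}[g_1,\ldots,g_k]$, a constant, hence $|\nu(P_n)-\nu(Q_n)|$ is bounded and $I(-1)=I(1)$ after dividing by $n$.

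Combining the pieces, for $\lambda=-\mu<0$ one gets $I(\lambda)=\mu\,I(-1)=\mu\,I(1)=|\lambda|\,I(1)$, which finishes the proof. The routine portions are the two integer-part estimates, immediate from Lemma \ref{continuity}; the genuine content is the reversal step, where the apparent danger is that reversing the order of a word with large exponents could alter its norm uncontrollably. The property $\mathsf{FM}$ defeats this precisely because Lemma \ref{fundamental} bounds the norm of each commutator $[f_j,f_j']$ uniformly, independently of the exponents, so a product of boundedly many of them contributes nothing to the stable norm.
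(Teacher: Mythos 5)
Your proof is correct, but it takes a somewhat different route from the paper's. For positive $\lambda$ the paper first treats rational $\lambda=q/p$ by passing to the subsequences $n\mapsto pn$ and $n\mapsto qn$ of the (already convergent, by Proposition \ref{limit}) sequence, and only reaches irrational $\lambda$ at the very end via the continuity in $\lambda$ supplied by Lemma \ref{continuity}; you instead handle every positive real at once by comparing $g_1^{[\lambda s_1n]}\cdots g_k^{[\lambda s_kn]}$ with $g_1^{[s_1[\lambda n]]}\cdots g_k^{[s_k[\lambda n]]}$ and running the index substitution $m=[\lambda n]$, which eliminates the density-plus-continuity step entirely. For the sign reversal the paper quotes Lemma \ref{distribute} (with $\lambda_1=1$, $\lambda_2=-1$) to get $[\mathtt{g}^{(-1)}]=[\bar{\mathtt{g}}]$ and then uses $||\bar{\mathtt{g}}||_\nu=||\mathtt{g}||_\nu$; your reversal step $I(-1)=I(1)$ re-derives exactly this content from scratch, via the same graphical-calculus decomposition of $(g_1^{a_1}\cdots g_k^{a_k})(g_1^{-a_1}\cdots g_k^{-a_k})$ into a number of commutators depending only on $k$ together with the uniform bound of Lemma \ref{fundamental} (for this particular word one can even take $l(k)=k-1$ by an easy induction, so the uniformity you need is unproblematic). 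Both arguments therefore rest on the same two pillars --- Lemma \ref{continuity} and the commutator bound coming from the property $\mathsf{FM}$ --- but yours is self-contained modulo those lemmas and avoids the detour through Lemma \ref{distribute}, which the paper needs anyway for the vector-space axioms. I see no gaps.
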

\begin{proof}
We first prove for the case when $\lambda$ is a positive rational number \textit{i.e.} $\lambda=\frac{q}{p}$ where $p,q$ are positive integers.
By the existence of the limits (Proposition \ref{limit}), since the limit of any subsequence equals to the one of the original sequence,
\begin{align*}
&\lim_{n\to\infty}\frac{1}{n}\cdot\nu(g_1^{[\lambda s_1n]}\cdots g_k^{[\lambda s_kn]})\\
&=\lim_{n\to\infty}\frac{1}{pn}\cdot\nu(g_1^{[q s_1n]}\cdots g_k^{[q s_kn]})\\
&=\lim_{n\to\infty}\frac{q}{pn}\cdot\nu(g_1^{[s_1n]}\cdots g_k^{[s_kn]})\\
&=\lambda\lim_{n\to\infty}\frac{1}{n}\cdot\nu(g_1^{[s_1n]}\cdots g_k^{[s_kn]}).\\
\end{align*}
We prove for the case $\lambda=-1$.

Let $\mathtt{g}$ denote the element $g_1^{s_1}g_2^{s_2}\cdots g_k^{s_k}$ of $A_G$.
By Lemma \ref{distribute},
$[\mathtt{g}^{(-1)}\cdot\mathtt{g}]=[\mathtt{g}^{(0)}]=[1]$.
Recall that $1\in(G\times\mathbb{R})^0$ is the trivial element of $A_G$. 
Thus $[\mathtt{g}^{(-1)}]=[\mathtt{g}^{(-1)}\cdot\mathtt{g}\cdot\bar{\mathtt{g}}]=[1\cdot\bar{\mathtt{g}}]=[\bar{\mathtt{g}}]$.
Therefore $||(-1)\mathtt{g}||_\nu=||\bar{\mathtt{g}}||_\nu=||\mathtt{g}||_\nu$ and we complete the proof for the case when $\lambda$ is a rational number.

Since Lemma \ref{continuity} implies that the function $\mathbb{R}\to\mathbb{R}$, 
$\lambda\mapsto\lim_{n\to\infty}\frac{1}{n}\cdot\nu(g_1^{[\lambda s_1n]}\cdots g_k^{[\lambda s_kn]})$ is continuous, we complete the proof.
\end{proof}

\begin{proof}[Proof of Proposition \ref{well-defined operations}]
Assume that elements $\mathtt{f}_1,\mathtt{f}_2,\mathtt{g}_1,\mathtt{g}_2$ of $A_G$ satisfy $[\mathtt{f}_1]=[\mathtt{f}_2]$ and $[\mathtt{g}_1]=[\mathtt{g}_2]$.
Then
\begin{align*}
&||(\mathtt{f}_1\cdot\mathtt{g}_1)\cdot\overline{(\mathtt{f}_2\cdot\mathtt{g}_2)}||_\nu\\
&=||\mathtt{f}_1\cdot\mathtt{g}_1\cdot\bar{\mathtt{g}}_2\cdot\bar{\mathtt{f}}_2||_\nu\\
&\leq||\mathtt{f}_1\cdot\mathtt{g}_1\cdot\bar{\mathtt{g}}_2\cdot\bar{\mathtt{f}}_1||_\nu+||\mathtt{f}_1\cdot\bar{\mathtt{f}}_2||_\nu\\
&=||\mathtt{g}_1\cdot\bar{\mathtt{g}}_2||_\nu+||\mathtt{f}_1\cdot\bar{\mathtt{f}}_2||_\nu=0.\\
\end{align*}
Thus $[\mathtt{f}_1\cdot\mathtt{g}_1]=[\mathtt{f}_2\cdot\mathtt{g}_2]$.

Assume that elements $\mathtt{g}_1,\mathtt{g}_2$ of $A_G$ satisfy $[\mathtt{g}_1]=[\mathtt{g}_2]$.
For any real number $\lambda$, Lemma \ref{real action} implies 
\[||\bar{\mathtt{g}}_1^{(\lambda)}\cdot\mathtt{g_2}^{(\lambda)}||_\nu=||(\bar{\mathtt{g}}_1\cdot\mathtt{g}_2)^{(\lambda)}||_\nu=|\lambda|\cdot||(\bar{\mathtt{g}}_1\cdot\mathtt{g}_2)||_\nu=0.\]
Thus $[\mathtt{g}_1^{(\lambda)}]=[\mathtt{g}_2^{(\lambda)}]$.
\end{proof}
\begin{lem}\label{commute}
Let $G$ be a group satisfying the property $\mathsf{FM}$ and $\nu$  a conjugation-invariant pseudo-norm on $G$.
Then for any elements $\mathsf{f},\mathsf{g}$ of $A_\nu$, 
\[\mathsf{f}+\mathsf{g}=\mathsf{g}+\mathsf{f}.\]
\end{lem}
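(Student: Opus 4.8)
The plan is to unwind all the definitions so that the asserted commutativity becomes the vanishing of $||\cdot||_\nu$ on a single ``commutator word''. Writing $\mathsf{f}=[\mathtt{f}]$ and $\mathsf{g}=[\mathtt{g}]$, the definition of $+$ gives $\mathsf{f}+\mathsf{g}=[\mathtt{f}\cdot\mathtt{g}]$ and $\mathsf{g}+\mathsf{f}=[\mathtt{g}\cdot\mathtt{f}]$, and by the definition of $\sim$ these two classes coincide exactly when $||(\mathtt{f}\cdot\mathtt{g})\cdot\overline{(\mathtt{g}\cdot\mathtt{f})}||_\nu=0$. Since $\overline{\mathtt{g}\cdot\mathtt{f}}=\bar{\mathtt{f}}\cdot\bar{\mathtt{g}}$, the entire lemma reduces to proving
\[||\mathtt{f}\cdot\mathtt{g}\cdot\bar{\mathtt{f}}\cdot\bar{\mathtt{g}}||_\nu=0.\]

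First I would write out this norm through its defining limit. With $\mathtt{f}=f_1^{s_1}\cdots f_k^{s_k}$ and $\mathtt{g}=g_1^{t_1}\cdots g_l^{t_l}$, put $a_n=f_1^{[s_1n]}\cdots f_k^{[s_kn]}$ and $b_n=g_1^{[t_1n]}\cdots g_l^{[t_ln]}$, so that the quantity above equals $\lim_n\frac{1}{n}\nu(c_n)$, where $c_n=a_n b_n\tilde a_n\tilde b_n$ with $\tilde a_n=f_k^{[-s_kn]}\cdots f_1^{[-s_1n]}$ and $\tilde b_n=g_l^{[-t_ln]}\cdots g_1^{[-t_1n]}$. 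It is therefore enough to show that $\nu(c_n)$ stays bounded as $n\to\infty$. The key step --- and the one I expect to be the main obstacle --- is to recognize $c_n$ as a single group commutator up to a bounded correction. The difficulty is that $\tilde a_n$ is not literally $a_n^{-1}$, because $[-s_in]$ and $-[s_in]$ differ (by at most $1$); applying Lemma \ref{continuity} to these bounded exponent discrepancies yields $\nu(a_n\tilde a_n)\leq\sum_i\nu(f_i)$ and $\nu(b_n\tilde b_n)\leq\sum_j\nu(g_j)$, both independent of $n$. Setting $u_n=a_n\tilde a_n$ and $v_n=b_n\tilde b_n$, so that $\tilde a_n=a_n^{-1}u_n$ and $\tilde b_n=b_n^{-1}v_n$, a direct rearrangement gives $c_n=(a_nb_na_n^{-1}b_n^{-1})(b_nu_nb_n^{-1})v_n$. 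Writing $[a_n,b_n]=a_nb_na_n^{-1}b_n^{-1}$ and using the conjugation-invariance of $\nu$, this yields $\nu(c_n)\leq\nu([a_n,b_n])+\nu(u_n)+\nu(v_n)$.

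It remains to bound $\nu([a_n,b_n])$ uniformly in $n$, and this is where the property $\mathsf{FM}$ enters. Both $a_n$ and $b_n$ lie in the subgroup $\langle f_1,\ldots,f_k,g_1,\ldots,g_l\rangle$ for every $n$, so, exactly as in the proof of Proposition \ref{limit}, Lemma \ref{fundamental} applies to the pair $(a_n,b_n)$ and gives $\nu([a_n,b_n])\leq 4E_{H,\nu}[f_1,\ldots,f_k,g_1,\ldots,g_l]$ for any $H\in\mathsf{FM}(G)$, a bound that is finite (by $\mathsf{FM}$) and independent of $n$. Combining the two estimates, $\nu(c_n)$ is dominated by the $n$-independent constant $4E_{H,\nu}[f_1,\ldots,g_l]+\sum_i\nu(f_i)+\sum_j\nu(g_j)$, whence $\lim_n\frac{1}{n}\nu(c_n)=0$. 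This establishes $||\mathtt{f}\cdot\mathtt{g}\cdot\bar{\mathtt{f}}\cdot\bar{\mathtt{g}}||_\nu=0$, and therefore $\mathsf{f}+\mathsf{g}=\mathsf{g}+\mathsf{f}$. I expect the conceptual content to be entirely in the uniform commutator bound of the last paragraph, with the only genuine technical care needed in verifying that the floor-function discrepancies are truly $O(1)$ so that the reduction of $c_n$ to $[a_n,b_n]$ goes through.
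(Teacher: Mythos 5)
Your proof is correct and follows essentially the same route as the paper's: reduce commutativity to $||\mathtt{f}\cdot\mathtt{g}\cdot\bar{\mathtt{f}}\cdot\bar{\mathtt{g}}||_\nu=0$, bound $\nu([a_n,b_n])$ uniformly in $n$ by $4E_{H,\nu}[\,\cdot\,]$ via Lemma \ref{fundamental} and the property $\mathsf{FM}$, and divide by $n$. The only difference is that you explicitly track the $O(1)$ discrepancy between $[-s_in]$ and $-[s_in]$ via Lemma \ref{continuity}, a point the paper's proof passes over silently; this is a small gain in rigor, not a different argument.
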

\begin{proof}
Assume that $\mathsf{f},\mathsf{g}$ are represented by $[\mathtt{f}]=[f_1^{s_1}f_2^{s_2}\cdots f_k^{s_k}]$, $[\mathtt{g}]=[g_1^{t_1}g_2^{t_2}\cdots g_l^{t_l}]$, respectively.
Fix an element $H$ of $\mathsf{FM}(G)$. Then $E_{H,\nu}[g_1,\ldots,g_l]<\infty$.
Since $g_1^{[t_1n]}g_2^{[t_2n]}\cdots g_l^{[t_ln]}\in\langle g_1,\ldots,g_l\rangle$ for any $n$, Lemma \ref{fundamental}  implies
\begin{align*}
&||\mathtt{f}\cdot\mathtt{g}\cdot\overline{(\mathtt{g}\cdot\mathtt{f})}||_\nu\\
&=||\mathtt{f}\cdot\mathtt{g}\cdot\bar{\mathtt{f}}\cdot\bar{\mathtt{g}}||_\nu\\
&=\lim_{n\to\infty}\frac{1}{n}\cdot\nu((f_1^{[s_1n]}f_2^{[s_2n]}\cdots f_k^{[s_kn]})(g_1^{[t_1n]}g_2^{[t_2n]}\cdots g_l^{[t_ln]})(f_1^{[s_1n]}f_2^{[s_2n]}\cdots f_k^{[s_kn]})^{-1}(g_1^{[t_1n]}g_2^{[t_2n]}\cdots g_l^{[t_ln]})^{-1})\\
&=\lim_{n\to\infty}\frac{1}{n}\cdot\nu([f_1^{[s_1n]}f_2^{[s_2n]}\cdots f_k^{[s_kn]},g_1^{[t_1n]}g_2^{[t_2n]}\cdots g_l^{[t_ln]}])\\
&=\lim_{n\to\infty}\frac{1}{n}\cdot 4E_{H,\nu}[g_1,\ldots,g_l]=0.\\
\end{align*}
Thus $\mathsf{f}+\mathsf{g}=[\mathtt{f}\cdot\mathtt{g}]=[\mathtt{g}\cdot\mathtt{f}]=\mathsf{g}+\mathsf{f}$.
\end{proof}

\begin{proof}[Proof of Proposition \ref{norm statement}]
By Lemma \ref{distribute}, \ref{real action} and \ref{commute}, for any elements $\mathsf{f},\mathsf{g}$ of $A_\nu$ and real numbers $\lambda_1,\lambda_2$,
\begin{itemize}
\item  $(\lambda_1+\lambda_2)\mathsf{g}=\lambda_1\mathsf{g}+\lambda_2\mathsf{g}$,
\item $||\lambda_1\mathsf{g}||_\nu=|\lambda_1|\cdot||\mathsf{g}||_\nu$,
\item $\mathsf{f}+\mathsf{g}=\mathsf{g}+\mathsf{f}$.
\end{itemize}
We can confirm the other axioms of a normed vector space easily. Thus we complete the proof of Proposition \ref{norm statement}.
\end{proof}


\section{Proof that examples satisfy the property $\mathsf{FM}$}\label{example proof}
In the present section, we prove that ($\mathrm{Ham}(\mathbb{R}^{2n}),\mathrm{Ham}(\mathbb{B}^{2n}))$ satisfies the property $\mathsf{FM}$.
We can prove other parts of Proposition \ref{FM examples} similarly.

We use the following notations.
For a diffeomorphism $g$ on a manifold $M$, let $\mathrm{Supp}(g)$ denote the support of $g$.
For a point $p$ of $\mathbb{R}^{2n}$ and a positive real number $R$, let $\mathbb{B}^{2n}(p,R)$ denote a subset $\{x\in\mathbb{R}^{2n} ; ||x-p||<R\}$ of $\mathbb{R}^{2n}$.
\begin{proof}
For simplicity, let $\mathcal{B}$ denote the subgroup $\mathrm{Ham}(\mathbb{B}^{2n})$ and $p_0$ denote the point $(3,0,\ldots,0)$ of $\mathbb{R}^{2n}$.

Let $f_0$ be a Hamiltonian diffeomorphism on $\mathbb{R}^{2n}$such that $f_0(\mathbb{B}^{2n})=\mathbb{B}^{2n}(p_0,1)$.
Fix Hamiltonian diffeomorphisms $g_1,\ldots,g_k$ on $\mathbb{R}^{2n}$.
Then there exists a positive real number $R$ such that $\mathrm{Supp}(g_1)\cup\cdots\cup\mathrm{Supp}(g_k)\subset\mathbb{B}^{2n}(0,R)$.
Since $f_0(\mathbb{B}^{2n})=\mathbb{B}^{2n}(p_0,1)$ and $\mathbb{B}^{2n}(p_0,1)\cap\mathbb{B}^{2n}=\emptyset$, we can take a Hamiltonian diffeomorphism $f$ such that $f(\mathbb{B}^{2n})=\mathbb{B}^{2n}$ and $ff_0(\mathbb{B}^{2n})\cap\mathbb{B}^{2n}(0,R)=\emptyset$.
Since $(ff_0f^{-1})\mathcal{B}(ff_0f^{-1})^{-1}=\mathrm{Ham}(ff_0f^{-1}(\mathbb{B}^{2n}))=\mathrm{Ham}(ff_0(\mathbb{B}^{2n}))$ and $g_1\mathcal{B}g_1^{-1}\cup\cdots\cup g_k\mathcal{B}g_k^{-1}=\mathrm{Ham(g_1(\mathbb{B}^{2n})\cup\cdots\cup g_k(\mathbb{B}^{2n}))}\subset \mathrm{Ham}(\mathbb{B}^{2n}(0,R))$, $ff_0(\mathbb{B}^{2n})\cap\mathbb{B}^{2n}(0,R)=\emptyset$ implies that $(ff_0f^{-1})\mathcal{B}(ff_0f^{-1})^{-1}$ commutes with $g_1\mathcal{B}g_1^{-1}\cup\cdots\cup g_k\mathcal{B}g_k^{-1}$.
Thus $f_0\in\mathrm{D}^{f}_{\mathcal{B}}(\mathcal{B})$.

Note that Banyaga's fragmentation lemma (\cite{Ba}) states that for any Hamiltonian diffeomorphism $g$, there exist Hamiltonian diffeomorphisms $f_1,\ldots,f_k$ such that $g\in \langle f_1\mathcal{B}f_1^{-1},\ldots,f_k\mathcal{B}f_k^{-1}\rangle$.
Thus $\mathrm{Ham}(\mathbb{R}^{2n})$ is c-generated by $\mathcal{B}$ and we complete the proof.
\end{proof}


\section{Are stably non-displaceable subsets heavy?: Bavard's duality in Hofer's geometry}\label{back to sg}
In the present paper, we have considered subgroups which are displaceable far away.
In the present section, we pose a problem on displaceable subgroups and give its application to symplectic geometry.

On notions related to symplectic geometry, we follow the ones of \cite{E}.
\begin{defi}
Let $G$ be a group, $H$ a subgroup of $G$ and $\mu\colon G\to\mathbb{R}$ an $H$-quasimorphism on $G$.
$\mu$ is called \textit{semi-homogeneous} if $\mu(g^n)=n\mu(g)$ for any element $g$ of $G$ and any non-negative integer $n$.
\end{defi}
Let $(M,\omega)$ be a $2m$-dimensional closed symplectic manifold.
A subset $X$ of $(M,\omega)$ is \textit{displaceable} if $\bar{X}\cap\phi_F^1(X)=\emptyset$ for some Hamiltonian function $F\colon S^1\times M\to\mathbb{R}$ where $\phi_F$ is the Hamiltonian diffeomorphism generated by $F$ and $\bar{X}$ is the topological closure of $X$.
$X$ is \textit{non-displaceable} otherwise.
Let $\mathsf{DO}(M)$ denote the set of displaceable open subsets of $(M,\omega)$.
A subset $X$ of a symplectic manifold $M$ is \textit{stably displaceable} if $X\times S^1$ is displaceable in $M\times T^\ast S^1$.
$X$ is \textit{stably non-displaceable} otherwise.

For an idempotent $a$ of the quantum homology $QH_\ast(M,\omega)$, Entov and Polterovich \cite{EP} defined the asymptotic spectral invariant $\mu_a\colon\widetilde{\mathrm{Ham}}(M)\to\mathbb{R}$ on the universal covering $\widetilde{\mathrm{Ham}}(M)$ of the group $\mathrm{Ham}(M)$ of Hamiltonian diffeomorphisms in terms of Oh-Schwarz spectral invariants and proved that $\mu_a$ is a semi-homogeneous $\widetilde{\mathrm{Ham}}_U(M)$-quasimorphism for any element $U$ of $\mathsf{DO}(M)$.
Here  $\widetilde{\mathrm{Ham}}_U(M)$ is the set of elements of $\widetilde{\mathrm{Ham}}(M)$ which are generated by Hamiltonian functions with support in $S^1\times U$.

A Hamiltonian function $F\colon S^1\times M\to\mathbb{R}$ is \textit{normalized} if $\int_MF_t\omega^m=0$ for any $t\in S^1$.
\begin{defi}[\cite{EP09}]
Let $(M,\omega)$ be a closed symplectic manifold and $a$ an idempotent of $QH_\ast(M,\omega)$.
A compact subset $X$ of $(M,\omega)$ is $a$-\textit{heavy} if for any normalized Hamiltonian function $F\colon S^1\times M\to\mathbb{R}$,
\[-\mu_a(\phi_F)\geq\mathrm{vol}(M)\cdot\inf_{S^1\times X}F,\]
where $\mathrm{vol}(M)=\int_M\omega^m$.

\end{defi}
In particular, if $X$ is $a$-heavy, $\mu_a(\phi_F)<0$ for any normalized Hamiltonian function $F$ with $F|_{S^1\times X}>0$.

\begin{rem}
The above definition of heaviness is different from the one of \cite{EP09} and \cite{E} (In their definition, they consider only autonomous Hamiltonian functions).
However, as remarked in \cite{S}, the above definition is known to be equivalent to the one of \cite{EP09} and \cite{E}.
\end{rem}

Entov and Polterovich also proved that heavy subsets are stably non-displaceable (\cite{EP09}).
In the present section, we consider the converse problem ``Are stably non-displaceable subsets heavy?".

\begin{defi}
Let $G$ be a group, $H$ a subgroup of $G$ and $K$ a subset of $G$.
We define the set $\mathrm{D}_H(K)$ of maps \textit{displacing} $K$ by
\[\mathrm{D}_H(K)=\{h_0\in G; h_0K(h_0)^{-1}\text{ commutes with }H\}.\]
\end{defi}

\begin{defi}
Let $G$ be a group and $H$ a subgroup of $G$.
$(G,H)$ satisfies the property $\mathsf{FD}$ if $G$ and $H$ satisfies the following conditions.
\begin{itemize}
\item[(1)] $G$ is c-generated by $H$,
\item[(2)] $\mathrm{D}_H(H)\neq\emptyset$.
\end{itemize}
A group $G$ satisfies the property $\mathsf{FD}$ if $(G,H)$ satisfies the property $\mathsf{FD}$ for some subgroup $H$.
\end{defi}

For a group $G$ which satisfies the property $\mathsf{FD}$, we define the set $\mathsf{FD}(G)$ by
\[\mathsf{FD}(G)=\{H\le G ; \text{$(G,H)$ satisfies the property $\mathsf{FD}$}\}.\]

We pose the following problem.
\begin{prob}\label{generalize}
Let $G$ be a group satisfying the property $\mathsf{FD}$ and $\nu$ a conjugation-invariant pseudo-norm on $G$.
Prove that
 for any element $g$ of $G$ such that $s\nu(g)>0$, there exists a function $\mu\colon G\to\mathbb{R}$ which is a semi-homogeneous $H$-quasimorphism for any element $H$ of $\mathsf{FD}(G)$ such that $\mu(g)>0$.
\end{prob}

Here, we give an application of Problem \ref{generalize} to symplectic geometry.

\begin{prop}\label{main proposition}
Assume that the positive answer of Problem \ref{generalize} holds.

Let $X$ be a stably non-displaceable compact subset of a closed symplectic manifold $(M,\omega)$.
For any normalized Hamiltonian function $F\colon S^1\times M\to\mathbb{R}$ with $F|_{S^1\times X}>0$,
there exists a function $\mu_F\colon\widetilde{\mathrm{Ham}}(M)\to\mathbb{R}$ which is a semi-homogeneous $\widetilde{\mathrm{Ham}}_U(M)$-quasimorphism for any element $U$ of $\mathsf{DO}(M)$ such that $\mu_F(\phi_F)<0$.
\end{prop}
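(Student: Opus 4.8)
The plan is to deduce Proposition \ref{main proposition} from the (assumed) positive answer to Problem \ref{generalize}, applied to the group $G=\widetilde{\mathrm{Ham}}(M)$ equipped with a suitable conjugation-invariant pseudo-norm, and then to flip signs at the very end. Concretely, I would (a) check that $G$ satisfies the property $\mathsf{FD}$ and that $\widetilde{\mathrm{Ham}}_U(M)\in\mathsf{FD}(G)$ for every $U\in\mathsf{DO}(M)$; (b) produce a conjugation-invariant pseudo-norm $\nu$ on $G$ with $s\nu(\phi_F)>0$; (c) feed $(G,\nu,\phi_F)$ into Problem \ref{generalize} to obtain a semi-homogeneous $H$-quasimorphism $\mu$, valid for every $H\in\mathsf{FD}(G)$, with $\mu(\phi_F)>0$; and (d) set $\mu_F=-\mu$.

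For step (a), I would take $H=\widetilde{\mathrm{Ham}}_U(M)$ with $U\in\mathsf{DO}(M)$. The c-generation condition follows from Banyaga's fragmentation lemma exactly as in Section \ref{example proof}: since conjugation sends $\widetilde{\mathrm{Ham}}_U(M)$ to $\widetilde{\mathrm{Ham}}_{g(U)}(M)$ and the images $g(U)$ cover $M$, every lift of a Hamiltonian diffeomorphism lies in a subgroup generated by conjugates of $\widetilde{\mathrm{Ham}}_U(M)$, so $\nu_H<\infty$ on all of $G$. For the second condition, displaceability of $U$ provides $\psi\in\mathrm{Ham}(M)$ with $\psi(U)\cap U=\emptyset$; choosing a lift $\tilde\psi\in G$ we have $\tilde\psi\,\widetilde{\mathrm{Ham}}_U(M)\,\tilde\psi^{-1}=\widetilde{\mathrm{Ham}}_{\psi(U)}(M)$, whose elements have support disjoint from $U$ and hence commute with $\widetilde{\mathrm{Ham}}_U(M)$; thus $\tilde\psi\in\mathrm{D}_H(H)$ and $\mathrm{D}_H(H)\neq\emptyset$. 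Therefore $(G,H)$ satisfies $\mathsf{FD}$ and $\widetilde{\mathrm{Ham}}_U(M)\in\mathsf{FD}(G)$.

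Step (b) is the heart of the matter and the step I expect to be the main obstacle. I would let $\nu$ be the Hofer pseudo-norm on $G=\widetilde{\mathrm{Ham}}(M)$, which is a genuine conjugation-invariant pseudo-norm, so that $s\nu$ is the asymptotic Hofer norm and it suffices to prove $s\nu(\phi_F)>0$. Since $X$ is compact and $F|_{S^1\times X}>0$, we have $c:=\min_{S^1\times X}F>0$. The point is to convert the hypothesis that $X\times S^1$ is non-displaceable in $M\times T^\ast S^1$ into a lower bound on $\nu(\phi_F^{\,n})$. I would argue by contradiction: if $s\nu(\phi_F)=0$ then $\nu(\phi_F^{\,n})=o(n)$, and a suspension/energy-capacity construction in the stabilization $M\times T^\ast S^1$ (in the spirit of Polterovich's treatment of stable non-displaceability and asymptotic Hofer geometry) would produce, for large $n$, a Hamiltonian diffeomorphism of $M\times T^\ast S^1$ of controlled Hofer energy that displaces $X\times S^1$, contradicting stable non-displaceability. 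Making this precise—in particular bounding the energy of the displacing map in terms of $\nu(\phi_F^{\,n})$ while ensuring that $c>0$ forces an actual displacement—is the delicate part, and essentially all of the symplectic input of the proposition is concentrated here.

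Finally, for steps (c) and (d): applying the assumed positive answer to Problem \ref{generalize} to $g=\phi_F$ yields a function $\mu\colon G\to\mathbb{R}$ that is a semi-homogeneous $H$-quasimorphism for every $H\in\mathsf{FD}(G)$ and satisfies $\mu(\phi_F)>0$. Set $\mu_F=-\mu$. The defining inequality of an $H$-quasimorphism and the defect $D$ are invariant under $\mu\mapsto-\mu$, and $(-\mu)(g^n)=-\mu(g^n)=-n\mu(g)=n(-\mu)(g)$ for non-negative $n$, so $\mu_F$ is again a semi-homogeneous $H$-quasimorphism for every $H\in\mathsf{FD}(G)$, in particular for every $\widetilde{\mathrm{Ham}}_U(M)$ with $U\in\mathsf{DO}(M)$. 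Since $\mu_F(\phi_F)=-\mu(\phi_F)<0$, this is exactly the assertion of Proposition \ref{main proposition}.
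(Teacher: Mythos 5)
Your overall architecture (apply the assumed answer to Problem \ref{generalize} with $G=\widetilde{\mathrm{Ham}}(M)$ and the Hofer pseudo-norm, check the property $\mathsf{FD}$ for $\widetilde{\mathrm{Ham}}_U(M)$, then negate) is exactly the paper's, and your steps (a), (c), (d) are fine — indeed (a) is more detailed than what the paper writes. But step (b), which you yourself flag as ``the heart of the matter'' and ``the delicate part,'' is left as an unexecuted sketch, and that is a genuine gap: without an actual proof that $s\nu(\phi_F)>0$ for the Hofer pseudo-norm, the proposition does not follow. Your proposed contradiction argument via a suspension/energy-capacity construction in $M\times T^\ast S^1$ is plausibly the right raw material, but as written it proves nothing; moreover, even granting a bound of the form ``$F|_{S^1\times X}\geq p$ implies $\|\phi_F\|_H\geq p$,'' you would still need an extra ingredient to upgrade it to a \emph{stable} (asymptotic) lower bound $\liminf_n\|\phi_F^n\|_H/n>0$, and your sketch does not address this.

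The paper closes this gap with two short moves. First, it quotes Polterovich's theorem (Theorem \ref{stably non-disp and hofer}, from \cite{P98},\cite{P01}): for stably non-displaceable $X$ and normalized $F$ with $F|_{S^1\times X}\geq p>0$, one has $\|\phi_F\|_H\geq p$ — this is precisely the black box you were trying to rederive. Second, it applies this not to $F$ but to the reparametrized Hamiltonians $F^{(n)}(t,x)=n\,F(nt,x)$, which satisfy $\phi_{F^{(n)}}=(\phi_F)^n$ and $F^{(n)}|_{S^1\times X}\geq np$ (here compactness of $X$ gives $p=\min_{S^1\times X}F>0$). This yields $\|(\phi_F)^n\|_H\geq np$ for all $n$, hence $s\nu(\phi_F)\geq p>0$ directly, with no contradiction argument and no new symplectic estimates. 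If you replace your step (b) by this citation plus the time-reparametrization trick, your proof becomes the paper's proof.
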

Proposition \ref{main proposition} states that ``stably non-displaceable subsets are heavy" in a very rough sense if the positive answer of Problem \ref{generalize} holds.

To prove Proposition \ref{main proposition}, we use the following Polterovich's theorem.
\begin{thm}[\cite{P98},\cite{P01}]\label{stably non-disp and hofer}
Let $X$ be a stably non-displaceable subset of a closed symplectic manifold $(M,\omega)$.
For any normalized Hamiltonian function $F\colon S^1\times M\to\mathbb{R}$ with $F|_{S^1\times X}\geq p$ for some positive number $p$,
$||\phi_F||_H\geq p$.
Here $||\cdot||_H\colon\widetilde{\mathrm{Ham}}(M)\to\mathbb{R}$ is the Hofer norm which is known to be a conjugation-invariant pseudo-norm.
\end{thm}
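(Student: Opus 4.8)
The plan is to prove the inequality by passing to the stabilization and pinching a single spectral quantity between $p$ and $\|\phi_F\|_H$. Write $\widetilde M=M\times T^*S^1$ with the product symplectic form and let $Z\subset T^*S^1$ denote the zero section, so that the hypothesis ``$X$ is stably non-displaceable'' means precisely that $X\times Z$ is non-displaceable in $\widetilde M$. I would fix an Oh--Schwarz spectral invariant $c_+(\psi)=c([M],\psi)$ for the fundamental class on the universal cover, extended to $\widetilde M$ via a cutoff that tames the non-compactness of the $T^*S^1$-factor. The goal $\|\phi_F\|_H\ge p$ then reduces to the two estimates $c_+(\widehat F)\le\|\phi_F\|_H$ and $c_+(\widehat F)\ge p$, where $\widehat F$ is a suspension of $F$ to $\widetilde M$ constructed below.

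First I would suspend: using the isotopy $\{\phi_F^t\}$ I build the mapping-torus Hamiltonian $\widehat F$ on $\widetilde M$, cut off in the fiber coordinate $r$ outside a neighborhood of $M\times Z$ so that the dynamics near $M\times Z$ is unaffected while $\widehat F$ acquires finite oscillation. For the \emph{upper} bound I would use the standard estimate $c_+(\psi)\le\int_0^1\max\widehat G_t\,dt$ for any Hamiltonian $\widehat G$ representing $\psi$; taking the infimum over representatives gives $c_+(\widehat F)\le\|\widehat F\|_{+}$, the one-sided Hofer norm, and a routine check shows the suspension is arranged so that $\|\widehat F\|_{+}\le\|\phi_F\|_H$. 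Since $\widehat F$ restricts to $F$ on the $M$-factor near $Z$, the condition $F|_{S^1\times X}\ge p$ yields $\widehat F\ge p$ on $(S^1\times X)\times Z$, which is the geometric input for the lower bound.

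For the \emph{lower} bound I would invoke the energy--capacity mechanism in $\widetilde M$ applied to the non-displaceable set $X\times Z$: in the contrapositive form, if $c_+(\widehat F)$ were strictly smaller than the level $p=\inf_{(S^1\times X)\times Z}\widehat F$, then the associated Floer continuation class would die below that level and $X\times Z$ could be displaced, contradicting stable non-displaceability. This forces $c_+(\widehat F)\ge p$. Combining with the upper bound gives $p\le c_+(\widehat F)\le\|\phi_F\|_H$, which is the claim.

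The hard part is exactly this lower bound, and it is the Floer-homological heart of \cite{P98}: one must convert the purely qualitative hypothesis ``$X\times Z$ cannot be displaced'' into the quantitative spectral inequality $c_+(\widehat F)\ge p$ anchored at the value $p$, and one must verify that passing from $M$ to $\widetilde M$ does not lose action. The point of stabilizing by $T^*S^1$, rather than working in $M$ directly, is precisely that the non-displaceable zero section $Z$ supplies a Lagrangian factor making the product (energy--capacity) estimate available even when $X$ by itself is only \emph{stably} non-displaceable; this is where the hypothesis is genuinely used. The remaining ingredients---that the one-sided norm of the cutoff suspension $\widehat F$ is controlled by $\|\phi_F\|_H$, and the bookkeeping of the normalization constants so that $\widehat F\ge p$ survives the cutoff---are routine and I would relegate them to verification.
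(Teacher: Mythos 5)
Your argument breaks at exactly the step you identify as the ``Floer-homological heart'': the lower bound $c_+(\widehat F)\ge p$. There is no energy--capacity mechanism that converts bare non-displaceability of $X\times Z$ into a spectral inequality anchored at the value $p=\inf_{(S^1\times X)\times Z}\widehat F$. The known inequalities run in the other direction: displaceability of the \emph{support} of a Hamiltonian (or of a set containing it) bounds spectral invariants from above; their contrapositive says nothing about Hamiltonians that are merely \emph{large} on a non-displaceable set. What you need is precisely the statement that $X\times Z$ is heavy with respect to the fundamental class, and ``non-displaceable $\Rightarrow$ heavy'' is false in general, while its stable variant is exactly the open question driving Section \ref{back to sg} of this paper (Problem \ref{generalize} and Proposition \ref{main proposition} exist because this implication is \emph{not} known). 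So your sketch quietly assumes a form of the open problem that the theorem is being used to attack; it is not a gap one can ``relegate to verification,'' and it is not what is proved in \cite{P98}. There are also secondary debts you defer that are themselves nontrivial (defining $c_+$ on the non-compact $M\times T^*S^1$, and checking that the cut-off suspensions of different representatives of $\phi_F$ define a common element of $\widetilde{\mathrm{Ham}}(M\times T^*S^1)$), but the lower bound is the fatal one.

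For comparison: the paper gives no proof at all --- it imports the theorem from \cite{P98}, \cite{P01} --- and Polterovich's actual argument is soft, using no Floer theory and no spectral invariants. One argues by contraposition: if $||\phi_F||_H<p$, choose $G$ generating the same element of $\widetilde{\mathrm{Ham}}(M)$ with Hofer length $\ell<p$, shift by functions of time so that $\min_M G_t=0$, and reparametrize time so that $\max_M G_t\le\ell'<p$ uniformly in $t$ (time reparametrization trades the $L^1$-in-time bound for a pointwise one without changing the class). The path $\psi_t=\phi_G^t\circ(\phi_F^t)^{-1}$ is then a contractible loop with Hamiltonian $K_t(x)=G_t(x)-F_t\bigl(\phi_F^t(\phi_G^t)^{-1}x\bigr)$, and its suspension $(x,q,r)\mapsto\bigl(\psi_q(x),q,r-K_q(\psi_q(x))\bigr)$ is a Hamiltonian diffeomorphism of $M\times T^*S^1$. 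For $y\in X$ one computes $K_q(\psi_q(y))=G_q(\psi_q(y))-F_q(y)\le\ell'-p<0$, so every point of $X\times Z$ is pushed to $r\ge p-\ell'>0$: the suspension displaces $X\times Z$, contradicting stable non-displaceability. Note the logical structure this reveals: the hypothesis ``stably non-displaceable'' is used only as a black box, and all the hard symplectic topology in \cite{P98} goes into verifying that particular sets \emph{are} stably non-displaceable --- not into the implication you were asked to prove.
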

\begin{proof}[Proof of Proposition \ref{main proposition}]
Since $X$ is compact, there exists some positive number $p$ with $F|_{S^1\times X}\geq p$.
For any positive integer $n$, we define a Hamiltonian function $F^{(n)}\colon S^1\times M\to\mathbb{R}$ by $F^{(n)}(t,x)=n\cdot F(nt,x)$.
Note that $\phi_{F^{(n)}}=(\phi_F)^n.$
Then, by $F^{(n)}|_{S^1\times X}\geq np$ and Theorem \ref{stably non-disp and hofer}, $||(\phi_F)^n||_H\geq np$ for any positive integer $n$.
Since $\widetilde{\mathrm{Ham}}_U(M)\in\mathsf{FD}(\widetilde{\mathrm{Ham}}(M))$ for any element $U$ of $\mathsf{DO}(M)$, by the positive answer of Problem \ref{generalize}, there exists a function $\mu_F^\prime\colon\widetilde{\mathrm{Ham}}(M)\to\mathbb{R}$ which is a semi-homogeneous $\widetilde{\mathrm{Ham}}_U(M)$-quasimorphism for any element $U$ of $\mathsf{DO}(M)$ such that $\mu_F^\prime(\phi_F)>0$.
Then $-\mu_F^\prime$ is a desired function. 
\end{proof}


\end{document}